\def\rul{\rule[-1.8mm]{0pt}{3.6ex}}
\def\reels{\mathbb{R}}
\def\complexes{\mathbb{C}}
\def\quaternions{\mathbb{H}}
\def\corps{\mathbb{K}}
\def\ad{\mbox{\textnormal{ad}}}
\def\Ad{\mbox{\textnormal{Ad}}}
\def\Id{\mbox{\textnormal{Id}}}
\def\lie#1{\g l\g i \g e({#1})}
\def\g {\mathfrak}
\def\alg{\g g}
\def\tialg{\tilde{\g g}}
\def\tip{\tilde{\g p}}
\def\tih{\tilde{\g h}}
\def\fracl#1#2{#1/#2}
\def\obullet{{\mbox{\makebox[1.5pt][l]{$\odot$}$\bullet$}}}
\newtheorem{theo}{Theorem}
\newtheorem{prop}[theo]{Proposition}
\newtheorem{corr}[theo]{Corollary}
\newtheorem{lem}[theo]{Lemma}
\newtheorem*{rem}{Remark}
\title[Infinitesimal semisimple symplectic extrinsic symmetric spaces]{Infinitesimal semisimple symplectic extrinsic symmetric spaces}
\author[Thomas Krantz]{Thomas Krantz}
\thanks{The author was supported by an AFR-grant of the ministry of
research of the G.-D. of Luxembourg and co-financed by the 'Marie Curie'-framework of the European Union.}
\begin{document}
\begin{abstract}
We study infinitesimal semi-simple extrinsic symmetric spaces and give a classification in the symplectic case.
\medskip

\noindent{\em Keywords:} symplectic, extrinsic, semisimple Lie-algebra, involution, symmetric space.

\noindent{\em MSC-2000:} 53C35, 53D05.
\end{abstract}
\maketitle

\section{Introduction}
Let $(M,s)$ be a symmetric space and let $a$ be a non degenerate reflexive form on $M$ invariant by the $s_x$ for $x\in M$. Let $\tilde M \hookrightarrow M$ be a submanifold. For every point $x\in \tilde M$ suppose that the tangent space $T_x \tilde M$ in $x$ is non degenerate w.r.t. $a$. Let $\mathcal{N}_x \tilde M=(T_x \tilde M)^{\perp_a}$ be the normal space in $x$. Suppose that for every point $x\in \tilde M$ there is an involutive automorphism $S_x$ of the symmetric space $(M,s)$ which verifies $S_x(x)=x$, $T_x(S_x)=-\Id_{T_x\tilde M} \oplus \Id_{\mathcal{N}_x \tilde M}$ and $S_x(\tilde M)=\tilde M$. We call $\tilde M$ an {\em extrinsic symmetric space} embedded in the symmetric
space $M$.

A big amount of work has been done in the case where the outer symmetric space is euclidian (\cite{KoNa}, \cite{Fe}, \ldots), pseudo-euclidian (\cite{Na0}, \cite{Ki}, \cite{KE}, \cite{Ka1}, \cite{Ka2}, \ldots) or symplectic affine(\cite{CGRS}, \cite{KS}, \ldots).

The general case where the outer space is a symmetric space has been less considered.
Riemannian extrinsic symmetric spaces have been classified by Naitoh and others, see the excellent overview in the book of Berndt-Console-Olmos. In his thesis N. Richard has considered the case where $a$ is a symplectic form (\cite{R}).

Let $G$ be the group of automorphisms of $(M,s)$ and $H\subset G$ the isotropy group fixing the point $o\in \tilde M$ so that $M=G/H$. Let $\tilde G$ be the group generated by the $S_x$ for $x\in\tilde M$ and $\tilde H$ be the subgroup of $\tilde G$ fixing the point $o$. In this article we will be interested more particularly in semi-simple extrinsic symmetric spaces {\em i.e.} where $G$ and $\tilde G$ are semi-simple.

We can consider the infinitesimal object associated to $\tilde M \hookrightarrow M$.
It is the data of the quintuple $(\g{g}, \tilde {\g{g}}, \sigma, \theta, a)$
where $\g{g}$ (resp. $\tilde {\g{g}}$) is the Lie algebra of $G$ (resp. $\tilde G$) {\em i.e.} $\g{g}=T_eG$ (resp.  $\tilde{\g{g}}=T_e\tilde G$),
$\sigma$ is $\Ad(s_o)$ and $\theta$ is $\Ad(S_o)$.

We will classify the semi-simple symplectic extrinsic symmetric quintuples.

{\sc Acknowledgements:} The author thanks Prof. Lorenz Schwachhöfer for support and helpful discussions, and {\em Technische Universität Dortmund} for hospitality from September 2010 to August 2011.
The author also acknowledges support from Prof. Martin Olbrich. In particular the characterization of symplectic extrinsic symmetric spaces as orbits of the isotropy representation of symplectic symmetric spaces has been obtained in collaboration.

\section{Semi-simple extrinsic symmetric quadruples}
\subsection{Basic definitions}
Let $\alg$ be a Lie algebra over the field $\corps=\reels$ or $\complexes$.
Let $\sigma$ be an involution of $\alg$, and $\g p$ (resp. $\g h$) the $-1$ (resp. $+1$) eigenspace of $\sigma$.
$(\g g, \sigma)$ is then called a {\em symmetric pair}. The Lie subalgebra $\g h$ is called the {\em isotropy algebra}.
It is known for a symmetric space $(M,s)$ that we can associate to it the symmetric pair $(\g g, \sigma)$, where $\g g$ is the Lie algebra associated to the Lie group $G$ generated by the $s_x$ for $x\in M$ and $\sigma$ is $\Ad(s_o)$. In this setting $\g h$ identifies to the Lie algebra of the isotropy group $H=\{g\in G \,|\, g\cdot o=o\}$ and $\g p$ to the tangent space $T_o M$.

According to $\corps=\complexes$ (resp. $\corps=\reels$), we call $(\g g, \sigma)$ a {\em complex} (resp. {\em real}) symmetric pair.
A real symmetric pair $(\g g, \sigma)$ is called {\em pseudo-complex} if it is obtained from a complex symmetric pair by restricting the field to $\reels$.

A symmetric pair is called {\em decomposable} if it is isomorphic to the direct product of two nontrivial symmetric pairs, {\em undecomposable} otherwise.

A symmetric pair $(\g g, \sigma)$ is called {\em (semi-)simple} (resp. {\em compact}), if $\g g$ is (semi-)simple (resp. compact).
A real simple symmetric pair is called {\em absolutely simple} if it is not pseudo-complex.

\subsection{The extrinsic setting}

Let $\theta$ and $\sigma$ be two commuting involutions of a semi-simple Lie algebra $\alg$.
Let $\g p$ (resp. $\g h$) be the $-1$ (resp. $+1$) eigenspace of $\sigma$. Let $\alg_-$ (resp. $\alg_+$) be the $-1$ (resp. $+1$) eigenspace of $\theta$. Define  $\g p_-:= \g p \cap \alg_-$, $\g p_+:= \g p \cap \alg_+$, $\g h_-:= \g h \cap \alg_-$, $\g h_+:= \g h \cap \alg_+$.
For $\lambda$ a $\corps$-linear mapping from $\g p_-$ to $\g h_-$, define $\tip := \langle X + \lambda(X) \,|\, X \in \g p_-\rangle$. Suppose $\tih := [\tip, \tip] \subset\g h$ and $[[\tip, \tip], \tip] \subset \tip$.
Let $\tialg:= \tip \oplus \tih$.
We call then $(\alg, \tialg, \sigma, \theta)$ an {\em extrinsic symmetric quadruple}.
Note that for an extrinsic symmetric quadruple the corresponding function $\lambda$ is uniquely defined.

For an extrinsic symmetric space as described in the introduction we obtain $\lambda$ ass follows:
The tangent space $T_o\tilde M$ identifies on one hand to the subspace $\g p_-$ of $T_o M$, on the other hand to $\tilde{\g p}$. This gives a mapping $X\mapsto X+\lambda(X)$ from $\g p_-$ to $\tilde{\g p}$.
Note that the equality $\lambda=0$ is equivalent to the property that the embedded extrinsic symmetric space is totally geodesic.

It is known that $\lambda$ satisfies the following properties:
$$[\lambda(x),y]=[\lambda(y),x]$$
and
$$\lambda([[x,y]+[\lambda(x),\lambda(y)],z])=[[x,y]+[\lambda(x),\lambda(y)],\lambda(z)],$$
for all $x,y,z\in\g p_-$.

As before an extrinsic symmetric quadruple is called {\em decomposable} if it is isomorphic to the direct product of two nontrivial extrinsic symmetric quadruples, {\em undecomposable} otherwise.

If moreover $\alg$ and $\tialg$ are semi-simple (resp. compact), $(\alg, \tialg, \sigma, \theta)$ is called a {\em semi-simple} (resp. {\em compact}) extrinsic symmetric quadruple.

\subsection{Invariant common Cartan subalgebra}\label{icsa}
Recall that a Cartan subalgebra (csa) $\g h_0$ of $\alg$ is a maximal nilpotent self-normalizing subalgebra of $\g g$ or, which is equivalent when $\alg$ is semi-simple, a maximal abelian subalgebra such that for every $x\in\g h_0$, $ad(x)$ acts semi-simply.

\begin{theo}\label{csa}
Let $(\alg, \tialg, \sigma, \theta)$ be a semisimple extrinsic symmetric quadruple over the field $\corps=\complexes$ such that
the rank\footnote{{\em i.e.} the dimension of the Cartan subalgebras} of $\tialg$ equals the rank of the isotropy subalgebra $\tih$ of $\tialg$.
Then  $\alg$ (resp. $\tialg$) admits a Cartan subalgebra $\g h_0$ (resp. $\tilde{\g h}_0$) such that $\tilde{\g h}_0\subset \g h_0$, $\g h_0$ is $\sigma$- and $\theta$-invariant and $\tilde{\g h}_0$ is $\theta$-invariant.
\end{theo}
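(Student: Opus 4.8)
The plan is to reduce the statement, in two easy steps, to an assertion purely about a reductive Lie algebra obtained by centralizing a well‑chosen Cartan subalgebra of $\tialg$, and then to settle that assertion via a compact real form. First I would note that $\theta$ restricts to $\tialg$ as an involution whose $+1$‑eigenspace is $\tih$ and whose $-1$‑eigenspace is $\tip$: indeed $\tip\subset\alg_-$ by construction, $\tih=[\tip,\tip]\subset\alg_+$, and $\tih\subset\g h$ by hypothesis. So $(\tialg,\theta|_{\tialg})$ is a symmetric pair with isotropy algebra $\tih$, and the hypothesis $\operatorname{rank}\tialg=\operatorname{rank}\tih$ says precisely that it has equal rank. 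By the standard fact that an equal‑rank symmetric pair carries a Cartan subalgebra inside its isotropy subalgebra, I may choose a Cartan subalgebra $\tilde{\g h}_0$ of $\tialg$ with $\tilde{\g h}_0\subset\tih$ (concretely: any Cartan subalgebra of the reductive algebra $\tih$ is toral in $\tialg$ of dimension $\operatorname{rank}\tialg$, its centralizer in $\tialg$ is $\theta$‑stable reductive with $\theta$‑fixed part $\tilde{\g h}_0$, and the rank count forces that centralizer to be abelian). Since $\tih\subset\g h\cap\alg_+$, this $\tilde{\g h}_0$ is pointwise fixed by both $\sigma$ and $\theta$, so it is already $\theta$‑invariant (and $\sigma$‑invariant), which is part of the conclusion.

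Next I would set $\g l:=\g z_\alg(\tilde{\g h}_0)$. As $\tilde{\g h}_0$ consists of $\ad_{\alg}$‑semisimple elements, $\g l$ is reductive in $\alg$ and contains a Cartan subalgebra of $\alg$; every Cartan subalgebra of $\g l$ is a Cartan subalgebra of $\alg$; and $\tilde{\g h}_0$ lies in the centre of $\g l$, hence in every Cartan subalgebra of $\g l$. Since $\sigma$ and $\theta$ fix $\tilde{\g h}_0$ pointwise, they preserve $\g l$ and restrict there to commuting involutions. Hence the whole statement follows once I produce a single Cartan subalgebra $\g h_0$ of $\g l$ invariant under both $\sigma$ and $\theta$: it is then automatically a $\sigma$‑ and $\theta$‑invariant Cartan subalgebra of $\alg$ containing the $\theta$‑invariant Cartan subalgebra $\tilde{\g h}_0$ of $\tialg$.

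The remaining task — and the only real difficulty — is to find a Cartan subalgebra of the reductive complex Lie algebra $\g l$ invariant under the Klein four‑group $\Gamma=\langle\sigma,\theta\rangle$, i.e.\ adapted simultaneously to all four homogeneous components of the induced $\relatifs/2\times\relatifs/2$‑grading of $\g l$. I would first invoke the classical fixed‑point argument (a compact group of automorphisms of a semisimple complex Lie algebra has a fixed point on the complete, non‑positively curved space of compact real forms, together with the elementary treatment of the abelian summand) to choose a compact real form $\g u$ of $\g l$ preserved by $\Gamma$; equivalently, $\sigma$ and $\theta$ commute with a common conjugation of $\g l$. It then remains to produce a $\Gamma$‑invariant maximal abelian subalgebra $\g s$ of the compact Lie algebra $\g u$, for $\g s\oplus i\g s$ will be the desired $\g h_0$. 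I would obtain $\g s$ by induction on $\dim\g u$: reducing to the semisimple case, $\g u$ always carries a nonzero $\Gamma$‑invariant abelian subalgebra $\g t$ — namely a Cartan subalgebra of $\g u^{\Gamma}$ when $\g u^{\Gamma}\neq 0$, and $\g u^{\sigma}$ itself (which is then forced to be abelian, since $\theta$ acts on it by $-\Id$) when $\g u^{\Gamma}=0$ — and $\g z_{\g u}(\g t)$ is a proper $\Gamma$‑invariant subalgebra of full rank in which one recurses. I expect the delicate point to be exactly this simultaneous handling of $\sigma$ and $\theta$: a Cartan subalgebra adapted to one of them need not be adapted to the other, so one cannot diagonalize the two involutions one after the other and must control the joint grading from the outset — which is why the equal‑rank hypothesis, used in the first step to place $\tilde{\g h}_0$ inside $\tih$, is the key that gets the argument started.
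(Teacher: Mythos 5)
Your argument is correct, and its first two reductions are exactly those of the paper: the equal-rank hypothesis gives a Cartan subalgebra $\tilde{\g h}_0$ of $\tialg$ inside $\tih$, which lies in $\g h_+$ and is therefore pointwise fixed by $\sigma$ and $\theta$; one then passes to its centralizer in $\alg$, a proper $\langle\sigma,\theta\rangle$-invariant reductive subalgebra whose Cartan subalgebras are Cartan subalgebras of $\alg$ and automatically contain $\tilde{\g h}_0$. Where you diverge is in the key lemma producing a $\langle\sigma,\theta\rangle$-invariant Cartan subalgebra of that centralizer. The paper invokes a Bourbaki-type result (Ch.~VII, \S 3, ex.~11--13, together with the invariant Levi--Malcev decomposition quoted from Kobayashi--Nomizu): a hypersolvable group of automorphisms of an arbitrary Lie algebra admits an invariant Cartan subalgebra, applied to the finite abelian group $\langle\sigma,\theta\rangle$. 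You instead exploit that the centralizer is complex reductive: you choose a compact real form preserved by the Klein four-group (Cartan's fixed-point theorem on the space of compact real forms) and build an invariant maximal torus by induction on dimension, with the pleasant observation that if the joint fixed algebra vanishes then the $\sigma$-fixed part is abelian because $\theta$ acts on it as $-\Id$. Both routes are sound; the paper's lemma is stated in much greater generality (no reductivity assumed, so the solvable case and the Levi decomposition must be handled), whereas your argument is tailored to the semisimple complex situation actually at hand and is arguably more transparent there, at the cost of importing the existence of an invariant compact real form, a classical result of comparable depth. The auxiliary facts you use along the way (a Cartan subalgebra of $\tih$ is toral in $\tialg$ of full rank, hence a Cartan subalgebra of $\tialg$; Cartan subalgebras of the centralizer of a toral subalgebra are Cartan subalgebras of $\alg$ and contain its center) are standard and are asserted with the same brevity in the paper's own proof.
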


Let us call $\g h_0$ in this case an {\em extrinsic Cartan subalgebra} associated to the data $(\alg, \tialg, \sigma, \theta)$.

\begin{proof}
We mainly follow the steps of the exercises 11 to 13 in Ch. VII, §3 of \cite{Bo}.

First we recall a result that can be found in (\cite{KN}, Vol. II, p.327).
\begin{lem}
If $\mathbf{G}$ is a compact group of automorphisms of a Lie algebra $\alg$,
$\alg$ admits a Levi-Malcev decomposition  $\alg=\g s \oplus \g{r}$ such that $\mathbf{G}$ leaves the Levi-subalgebra $\g s$ invariant.
\end{lem}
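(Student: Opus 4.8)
The plan is to handle the two factors of a Levi--Malcev decomposition separately. The radical $\g r$ of $\alg$ is the unique maximal solvable ideal, hence characteristic, so it is automatically stable under every element of $\mathbf G$, and only the Levi complement requires work. By Levi's theorem a complement $\g s$ exists, and by the Malcev--Harish-Chandra theorem any two Levi complements are conjugate by an automorphism $e^{\ad x}$ with $x$ in the nilpotent ideal $\g n:=[\alg,\g r]$ (each $\ad x$, $x\in\g n$, being a nilpotent endomorphism of $\alg$, so that $N:=\exp(\ad\g n)\subseteq\Aut(\alg)$ is a unipotent group); more precisely $N$ acts \emph{simply transitively} on the set $\mathcal L$ of Levi complements — transitivity is Malcev--Harish-Chandra, and an $e^{\ad x}$ with $x\in\g n$ that stabilises a Levi complement is forced to be the identity. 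Since $\g n$ is characteristic, $\mathbf G$ normalises $N$ in $\Aut(\alg)$, and the $\mathbf G$-action on the $N$-torsor $\mathcal L$ is compatible with the conjugation action of $\mathbf G$ on $N$.

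I would then work inside the subgroup $P:=\mathbf G\cdot N$ of $\Aut(\alg)$ — a subgroup because $\mathbf G$ normalises $N$, and closed (hence a Lie group) because $\mathbf G$ is compact and $N$ is closed, with finitely many components since $N$ is connected. As $N$ is a simply connected nilpotent group it contains no nontrivial compact subgroup, so $\mathbf G\cap N=\{e\}$, $P=\mathbf G\ltimes N$, and $P/N\cong\mathbf G$ is compact. Fixing $\g s_0\in\mathcal L$, transitivity of $N$ gives $P=N\cdot P_{\g s_0}$ and simple transitivity gives $N\cap P_{\g s_0}=\{e\}$, so $P=N\rtimes P_{\g s_0}$ with $P_{\g s_0}\cong\mathbf G$ compact; consequently both $\mathbf G$ and $P_{\g s_0}$ are maximal compact subgroups of $P$, since a strictly larger compact subgroup would meet $N$ nontrivially. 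By Mostow's conjugacy theorem for maximal compact subgroups of a Lie group with finitely many components, $p\mathbf Gp^{-1}=P_{\g s_0}$ for some $p\in P$; writing $p=nh$ with $n\in N$, $h\in\mathbf G$ and using $h\mathbf Gh^{-1}=\mathbf G$ yields $n\mathbf Gn^{-1}=P_{\g s_0}$, i.e.\ $\mathbf G$ is exactly the stabiliser of the Levi complement $\g s:=n^{-1}\cdot\g s_0$, so $\alg=\g s\oplus\g r$ is the required $\mathbf G$-stable decomposition.

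Alternatively the last step can be carried out cohomologically without invoking conjugacy of maximal compacts: choosing $\g s_0$ yields a continuous cocycle $c\colon\mathbf G\to N$ given by $g\cdot\g s_0=c(g)\cdot\g s_0$, and a $\mathbf G$-stable Levi complement exists iff $c$ is a coboundary, i.e.\ iff the nonabelian continuous cohomology $H^1(\mathbf G,N)$ vanishes; this is proved by dévissage along the lower central series of $N$, using that the $\mathbf G$-stable centre $Z(N)$ is a real vector group with linear $\mathbf G$-action and hence has vanishing continuous cohomology in positive degrees (Haar averaging), together with the nonabelian cohomology sequence of $1\to Z(N)\to N\to N/Z(N)\to1$. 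Either way, the one substantial point — and the only place compactness of $\mathbf G$ is used — is this fixed-point statement for $\mathbf G$ on the torsor $\mathcal L$; the rest is bookkeeping with the Levi--Malcev structure, the only mildly delicate item there being that the $\mathbf G$-action on $\mathcal L$ and the identification $\mathcal L\cong N$ (whence the continuity of $c$) are smooth, which holds because the whole picture sits inside the Lie group $\Aut(\alg)$ with $\mathcal L$ an orbit.
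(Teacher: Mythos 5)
Your reduction of the lemma to a fixed-point/conjugacy statement for $\mathbf{G}$ acting on the set $\mathcal L$ of Levi complements is fine up to one crucial claim, and that claim is false: $N=\exp(\ad\,\g n)$, $\g n=[\alg,\g r]$, does \emph{not} act simply transitively on $\mathcal L$ in general. Transitivity is indeed Malcev--Harish-Chandra, and you are right that an element $e^{\ad x}\in N$ stabilising a Levi complement $\g s$ must fix it pointwise and hence satisfies $[x,\g s]=0$; but such an $x$ need not be central in $\alg$, so the stabiliser of $\g s$ in $N$ can be nontrivial. Concretely, let $\g r$ be the free $3$-step nilpotent Lie algebra on two generators $p,q$ (basis $p,q,\;z=[p,q],\;[z,p],\;[z,q]$), let $\g s=\g s\g l(2,\reels)$ act on $\langle p,q\rangle$ by the standard representation, extended to derivations of $\g r$, and set $\alg=\g s\ltimes\g r$. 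Then $\g n=[\alg,\g r]=\g r$, the element $z$ centralises $\g s$ (it spans the trivial summand $\Lambda^2\langle p,q\rangle$), yet $[z,p]\neq0$; so $\exp(\ad z)$ is a nontrivial element of $N$ fixing $\g s$ pointwise. Consequently $\mathcal L$ is not an $N$-torsor, $P_{\g s_0}$ is not isomorphic to $\mathbf{G}$ -- in this example it contains the noncompact group $\exp(\reels\,\ad z)$ -- so it is in general not a maximal compact subgroup of $P$, and Mostow's conjugacy theorem cannot be applied to the pair $(\mathbf{G},P_{\g s_0})$ as written. The same defect breaks your cohomological variant: $c(g)$ is only defined modulo $\mathrm{Stab}_N(\g s_0)$, so you do not actually get a cocycle $\mathbf{G}\to N$.

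The strategy is salvageable, but an extra step is needed. Your argument that $\mathbf{G}$ is maximal compact in $P=\mathbf{G}\ltimes N$ is correct; what is missing is that $P_{\g s_0}$ -- a closed subgroup with finitely many components, an extension of $P/N\cong\mathbf{G}$ by the connected unipotent group $\mathrm{Stab}_N(\g s_0)=\exp(\ad(\g n\cap\g z_{\alg}(\g s_0)))$ -- contains a compact subgroup mapping \emph{onto} $P/N$; such a subgroup is then a maximal compact subgroup of $P$ lying in $P_{\g s_0}$, and Mostow's conjugacy finishes the proof as you intend. But this splitting statement (an extension of a compact group by a simply connected nilpotent group splits) is of the same nature as the lemma itself, so it must be quoted or proved, e.g.\ by carrying out your averaging/d\'evissage argument carefully at the level of these stabilisers rather than on a fictitious torsor. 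For comparison: the paper gives no internal proof of this lemma at all, it simply cites Kobayashi--Nomizu (Vol.~II, p.~327); so the only question here is the correctness of your argument, and as written it has the genuine gap just described.
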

Note that the radical $\g{r}$ is also $\mathbf{G}$-invariant.

\begin{lem}\label{ginv}
Let $\alg$ be a Lie algebra and $\mathbf{G}$ a hypersolvable group of automorphisms of $\alg$, then there is a $\mathbf{G}$-invariant Cartan subalgebra $\g h$ of $\alg$. If $\alg=\g s \oplus \g{r}$ is a $\mathbf{G}$-invariant Levi-Malcev decomposition, $\g h$ can be found such that it is the direct sum of a $\mathbf{G}$-invariant csa of the Levi-subalgebra $\g s$ and a $\mathbf{G}$-invariant csa of the radical $\g{r}$.
\end{lem}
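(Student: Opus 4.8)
The plan is to follow the three Bourbaki exercises cited, arguing by induction on $\dim_\corps\alg$. As a preliminary reduction one may assume $\corps=\complexes$: if $\corps=\reels$, apply the result to $\alg\otimes_\reels\complexes$ with $\mathbf{G}$ extended $\complexes$-linearly, and note that the invariant Cartan subalgebra produced is defined over $\reels$, since every subalgebra constructed below --- generalized null-spaces $\alg^0(x)$, centralizers of toral subalgebras, $0$-weight spaces, the Levi--Malcev pieces --- is obtained by rational operations from $\reels$-rational data. Throughout one uses the characterization of Cartan subalgebras in characteristic $0$ (Exercise 11): a subalgebra $\g h$ of $\alg$ is a Cartan subalgebra if and only if it coincides with $\alg^0(\g h)$, the set of $y\in\alg$ killed by a power of $\ad h$ for each $h\in\g h$; every Cartan subalgebra equals $\alg^0(x)$ for a regular $x$; and the formation of $\alg^0$ is equivariant for $\Aut(\alg)$, in particular for $\mathbf{G}$.

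If $\alg$ is nilpotent, then $\alg$ is its own unique Cartan subalgebra, it is trivially $\mathbf{G}$-invariant, and with $\g s=0$, $\g r=\alg$ the refined conclusion holds too. So assume $\alg$ is not nilpotent. The heart of the matter is to produce a \emph{proper} $\mathbf{G}$-invariant subalgebra $\g a\subsetneq\alg$ every Cartan subalgebra of which is already a Cartan subalgebra of $\alg$; the induction hypothesis applied to $\g a$ then settles the first assertion. The natural candidate is $\g a=\alg^0(\g t)$, the centralizer of a nonzero $\mathbf{G}$-invariant commutative subalgebra $\g t$ of $\ad$-semisimple elements (since $\g t$ is $\ad$-semisimple, $\alg^0(\g t)$ is precisely the centralizer of $\g t$): then $\g t$ lies in the centre of $\g a$, hence in every Cartan subalgebra $\g e$ of $\g a$, so that $\alg^0(\g e)\subseteq\alg^0(\g t)=\g a$ and therefore $\alg^0(\g e)=\g a^0(\g e)=\g e$, whence $\g e$ is a Cartan subalgebra of $\alg$. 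The existence of such a $\g t$, with centralizer properly contained in $\alg$ --- here the non-nilpotency of $\alg$ is used --- is extracted from the structure of $\mathbf{G}$ as a hypersolvable group by the eigenvector argument of Exercise 12 (in the non-split cases one passes, as in Bourbaki, through an auxiliary torus of derivations).

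For the refined statement, suppose $\alg=\g s\oplus\g r$ is a $\mathbf{G}$-invariant Levi--Malcev decomposition. Since $\g s$ is $\mathbf{G}$-invariant and semisimple, the first part yields a $\mathbf{G}$-invariant Cartan subalgebra $\g h_{\g s}$ of $\g s$. Let $\g m:=\g r\cap\alg^0(\g h_{\g s})$ be the generalized $0$-weight space of $\g h_{\g s}$ acting on $\g r$; it is a subalgebra of $\g r$, hence solvable, and it is $\mathbf{G}$-invariant, being built functorially from the $\mathbf{G}$-invariant data $\g h_{\g s}$ and $\g r$. As $\dim\g m<\dim\alg$, the result applies to $\g m$ and yields a $\mathbf{G}$-invariant Cartan subalgebra $\g c$ of $\g m$. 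Then $\g h:=\g h_{\g s}\oplus\g c$ is a Cartan subalgebra of $\alg$ --- this is the content of Exercise 13, that a Cartan subalgebra of the Levi part together with a Cartan subalgebra of the $0$-weight space in $\g r$ of the former assemble to a Cartan subalgebra of $\alg$ --- it is $\mathbf{G}$-invariant, and it is visibly of the asserted direct-sum form, $\g c$ being the Cartan subalgebra attached to the radical (for $\g s=0$ one has $\g m=\g r$, so $\g c$ is then literally a Cartan subalgebra of $\g r$).

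The step I expect to be the real obstacle is the one in the second paragraph: manufacturing the $\mathbf{G}$-invariant toral subalgebra $\g t$ with proper centralizer --- that is, converting the eigenvector/flag input available for hypersolvable groups into a toral subalgebra and verifying that non-nilpotency of $\alg$ genuinely forces $\alg^0(\g t)\subsetneq\alg$ --- together with the care needed over $\reels$ so that $\mathbf{G}$ stabilizes the relevant real subspaces (or, alternatively, the descent from $\complexes$ indicated above). This is precisely the technical content of Exercises 11--12 of \cite{Bo}; granting it, the Levi--Malcev refinement is comparatively routine.
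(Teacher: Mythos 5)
The Levi--Malcev half of your argument is essentially the paper's own: a $\mathbf{G}$-invariant csa $\g h_{\g s}$ of $\g s$, then the $\mathbf{G}$-invariant solvable algebra $\g r\cap\alg^0(\g h_{\g s})$ (the paper's $\g c\cap\g r$, with $\g c$ the centralizer of $\g h_{\g s}$, which coincides with $\alg^0(\g h_{\g s})$ by Weyl complete reducibility), and the assembly of the two csa's. The gap is in the first half, where you collapse Bourbaki's case distinction into one induction whose pivot is a nonzero $\mathbf{G}$-invariant toral subalgebra $\g t\subset\alg$ with $\alg^0(\g t)\subsetneq\alg$, claimed to exist whenever $\alg$ is not nilpotent. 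That existence claim is false: a solvable non-nilpotent Lie algebra may contain no nonzero $\ad$-semisimple element at all. Take $\alg=\complexes x\ltimes\complexes^2$ with $[x,e_1]=e_1+e_2$, $[x,e_2]=e_2$, $[e_1,e_2]=0$: for $a\neq 0$ the operator $\ad(ax+v)$ restricts on the invariant subspace $\complexes^2$ to $a$ times a nontrivial Jordan block, hence is not semisimple, while every $\ad(v)$, $v\in\complexes^2$, is nilpotent; so the only toral subalgebra is $0$, and your reduction step breaks down exactly in the solvable non-nilpotent case (here even with $\mathbf{G}$ trivial, where the lemma itself is obvious).

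Nor is this repaired by your citation: the fixed-point/eigenvector mechanism of Exercise 12 is specific to semisimple $\alg$ (it rests on the nonvanishing of the fixed algebra of a finite-order automorphism of a complex semisimple Lie algebra, on that fixed algebra being reductive, and on its csa's being toral in $\alg$), whereas the solvable case is Exercise 13 and proceeds by a different induction, through $\mathbf{G}$-invariant ideals and quotients rather than toral subalgebras --- precisely the case the paper isolates in a separate lemma before treating the semisimple and mixed cases. Your parenthetical escape via an ``auxiliary torus of derivations'' changes the framework: $\alg^0(\g t)$ is then no longer the centralizer of a subalgebra of $\alg$, so the argument that its Cartan subalgebras are Cartan subalgebras of $\alg$, and the $\mathbf{G}$-invariance of the auxiliary torus, would both have to be established, and none of that is supplied. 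A secondary, lesser point: the opening descent from $\complexes$ to $\reels$ is asserted rather than proved (the complex construction involves choices --- eigenvectors, ``take any csa'' --- that need not be defined over $\reels$), though this is harmless for the paper's application, which is over $\complexes$. The solvable step, however, genuinely needs the separate treatment the paper gives it.
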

The proof of lemma~\ref{ginv} follows from several lemmas.

\begin{lem}
Let $\mathbf{G}$ be a finite group of automorphisms of a solvable Lie algebra $\alg$.
There exists a $\mathbf{G}$-invariant Cartan subalgebra $\g h$ of $\alg$.
\end{lem}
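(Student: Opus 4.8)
The plan is to argue by induction on $\dim\alg$. If $\alg$ is nilpotent it is its own unique Cartan subalgebra and there is nothing to prove; this is the base of the induction, so assume $\alg$ is solvable but not nilpotent. Choose a minimal nonzero $\mathbf{G}$-invariant ideal $\g a$ of $\alg$. Since $[\g a,\g a]$ is a $\mathbf{G}$-invariant ideal strictly smaller than $\g a$ ($\alg$ being solvable), minimality forces $[\g a,\g a]=0$, so $\g a$ is abelian. Put $\bar\alg:=\alg/\g a$ with its induced $\mathbf{G}$-action, let $\pi\colon\alg\to\bar\alg$ be the projection, and apply the induction hypothesis to get a $\mathbf{G}$-invariant Cartan subalgebra $\bar{\g h}$ of $\bar\alg$. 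Throughout I will use the standard structure theory of Cartan subalgebras and quotients (Bourbaki, Ch.~VII, \S\S1--2): $\pi$ maps Cartan subalgebras of $\alg$ onto Cartan subalgebras of $\bar\alg$; conversely, writing $\g m:=\pi^{-1}(\bar{\g h})$, the Cartan subalgebras of $\alg$ whose image is $\bar{\g h}$ are exactly the Cartan subalgebras of $\g m$, so that any Cartan subalgebra of $\g m$ is one of $\alg$; Cartan subalgebras are nilpotent and self-normalising; and over an infinite field they exist.

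If $\g m\subsetneq\alg$, then $\g m$ is a proper $\mathbf{G}$-invariant solvable subalgebra (the preimage of the $\mathbf{G}$-invariant $\bar{\g h}$), so the induction hypothesis applied to $\g m$ yields a $\mathbf{G}$-invariant Cartan subalgebra $\g h$ of $\g m$; by the quoted correspondence $\g h$ is then a $\mathbf{G}$-invariant Cartan subalgebra of $\alg$, and we are done in this case.

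The decisive case is $\g m=\alg$, i.e.\ $\bar\alg$ is nilpotent (so $\bar{\g h}=\bar\alg$). Here $\g a$ cannot be central in $\alg$, since a central extension of a nilpotent Lie algebra is nilpotent, contrary to assumption; hence $\g a\cap\g z(\alg)$ is a $\mathbf{G}$-invariant ideal properly contained in $\g a$ and so equals $0$, whence $[\alg,\g a]\neq 0$, and as $[\alg,\g a]$ is a $\mathbf{G}$-invariant ideal inside $\g a$ minimality gives $[\alg,\g a]=\g a$. For any Cartan subalgebra $\g h$ of $\alg$ we then have $\pi(\g h)=\bar\alg$, hence $\g h+\g a=\alg$ and $[\g h,\g a]=[\alg,\g a]=\g a$; since $\g h$ acts nilpotently on the submodule $\g h\cap\g a$ of $\g h$ while $[\g h,\g a]=\g a$, the Fitting decomposition of $\g a$ under the nilpotent Lie algebra $\g h$ forces $\g h\cap\g a=0$. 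Thus every Cartan subalgebra of $\alg$ is a vector-space complement of $\g a$. Fixing one such complement $\g h_0$, each Cartan subalgebra is the graph $\g h_f=\{x+f(x)\mid x\in\g h_0\}$ of a unique linear map $f\colon\g h_0\to\g a$; the condition that $\g h_f$ be a subalgebra — equivalently (such a graph is automatically nilpotent and, again because $[\g h_f,\g a]=\g a$, self-normalising) a Cartan subalgebra — is an inhomogeneous linear condition on $f$, so the admissible maps form an affine subspace $\mathcal{A}\subseteq\operatorname{Hom}_{\corps}(\g h_0,\g a)$, nonempty since Cartan subalgebras exist. Because $\mathbf{G}$ acts $\corps$-linearly on $\alg$, on $\g a$ and on $\bar\alg$ but need not preserve $\g h_0$, the induced action $g\cdot\g h_f=\g h_{g\cdot f}$ on $\mathcal{A}$ is affine; averaging, $f:=\tfrac1{|\mathbf{G}|}\sum_{g\in\mathbf{G}}g\cdot f_0$ (for any $f_0\in\mathcal{A}$) is a $\mathbf{G}$-fixed point of $\mathcal{A}$, and $\g h_f$ is the required $\mathbf{G}$-invariant Cartan subalgebra.

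The only genuinely delicate point is this last case: when passing to $\alg/\g a$ does not reduce the dimension of the relevant subalgebra (i.e.\ when $\alg/\g a$ is nilpotent), one must instead realise all Cartan subalgebras of $\alg$ as the points of an explicit affine family over a fixed complement of $\g a$ and check that $\mathbf{G}$ acts affinely on it. The identity $[\alg,\g a]=\g a$ — forced by minimality of $\g a$ together with non-nilpotency of $\alg$ — and the vanishing of the zero Fitting component are exactly what make the family affine and the averaging argument work; everything else is bookkeeping with Bourbaki, Ch.~VII, \S\S1--2.
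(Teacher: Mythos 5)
Your proof is correct; the paper gives no details at this point, saying only ``proof by induction, see Bourbaki, Ch.~VII, \S 3, ex.~13'', and your induction --- passing to a minimal abelian $\mathbf{G}$-invariant ideal $\g a$, reducing to the quotient or to $\pi^{-1}(\bar{\g h})$ when that is proper, and in the remaining case (nilpotent quotient, $[\alg,\g a]=\g a$) averaging over the affine family of Cartan subalgebras complementary to $\g a$ --- is exactly the kind of argument that exercise calls for. So you have in effect supplied the proof the paper omits, and I see no gap in it.
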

Proof by induction see \cite{Bo} ex. 13.

\begin{lem}
Let $\alg$ be a neither semisimple nor solvable Lie algebra.
Let $\mathbf{G}$ be a hypersolvable group of automorphisms of $\alg$, and let $\alg=\g s \oplus \g{r}$ be a $\mathbf{G}$-invariant Levi-Malcev decomposition.
If the semi-simple Lie algebra $\g s$ admits a $\mathbf{G}$-invariant Cartan subalgebra, then $\alg$ admits a $\mathbf{G}$-invariant Cartan subalgebra $\g h$ which is the sum of a $\mathbf{G}$-invariant csa of $\g s$ and a $\mathbf{G}$-invariant csa of $\g{r}$.

\end{lem}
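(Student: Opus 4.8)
The plan is to follow the reduction of Bourbaki, Ch.~VII, \S3, exercises~11--12: to a $\mathbf{G}$-invariant Cartan subalgebra of $\g s$ one associates its Fitting null component in $\alg$, shows that this component is a $\mathbf{G}$-invariant \emph{solvable} subalgebra which, moreover, splits as a direct product of Lie algebras along the Levi--Malcev decomposition, and then invokes the solvable case.

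By hypothesis $\g s$ carries a $\mathbf{G}$-invariant Cartan subalgebra $\g c$. Since $\g s$ is semisimple, $\g c$ is abelian and $\ad_{\g s}(x)$ is semisimple for each $x\in\g c$; as $\g r$ is an ideal of $\alg$ it is a finite-dimensional $\g s$-module under $\ad$, so $\ad_{\alg}(x)$ is in fact semisimple on the whole of $\alg$ for every $x\in\g c$. Put
\[
 \g n:=\alg^{0}(\g c)=\{\,y\in\alg \mid \forall x\in\g c\ \exists N,\ (\ad x)^{N}y=0\,\}.
\]
By \cite{Bo}, Ch.~VII, \S1, $\g n$ is a subalgebra of $\alg$ containing $\g c$, and by \cite{Bo}, Ch.~VII, \S2, every Cartan subalgebra of $\g n$ is a Cartan subalgebra of $\alg$; moreover $\g n$ is $\mathbf{G}$-invariant, since each $g\in\mathbf{G}$ is an automorphism with $g\cdot\g c=\g c$, whence $g\cdot\alg^{0}(\g c)=\alg^{0}(g\cdot\g c)=\alg^{0}(\g c)$. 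Because $\ad(\g c)$ stabilises both $\g s$ and $\g r$, the Fitting decomposition is compatible with $\alg=\g s\oplus\g r$, so $\g n=\g s^{0}(\g c)\oplus\g r^{0}(\g c)$. Here $\g s^{0}(\g c)=\g c$, since $\g c$ is a Cartan subalgebra of $\g s$ acting semisimply; and since $\ad_{\alg}(x)$ is semisimple for $x\in\g c$, on $\g r^{0}(\g c)$ --- where all eigenvalues of $\ad x$ vanish --- $\ad x$ acts by $0$, so $\g r_{0}:=\g r^{0}(\g c)=\g z_{\g r}(\g c)$ and $[\g c,\g r_{0}]=0$. Therefore $\g n=\g c\oplus\g r_{0}$ is a \emph{direct} product of Lie algebras, with $\g c$ abelian and $\g r_{0}$ a $\mathbf{G}$-invariant solvable subalgebra of $\g r$; in particular $\g n$ is solvable.

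Now apply the solvable case to the solvable Lie algebra $\g r_{0}$ with its hypersolvable automorphism group $\mathbf{G}$: this produces a $\mathbf{G}$-invariant Cartan subalgebra $\g c_{0}$ of $\g r_{0}$. Since $\g n=\g c\times\g r_{0}$ with $\g c$ abelian, $\g h:=\g c\oplus\g c_{0}$ is a Cartan subalgebra of $\g n$, hence a Cartan subalgebra of $\alg$ by the previous paragraph; it is $\mathbf{G}$-invariant, and it is the direct sum of the $\mathbf{G}$-invariant Cartan subalgebra $\g c$ of $\g s$ and the $\mathbf{G}$-invariant Cartan subalgebra $\g c_{0}$ of $\g r_{0}=\g r^{0}(\g c)$, as required.

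The one genuinely delicate point is contained in the second paragraph: one has to notice that the hypothesis ``$\g c$ is a Cartan subalgebra of the \emph{semisimple} $\g s$'' forces $\ad_{\alg}(\g c)$ to act semisimply on all of $\alg$, which collapses $\g r^{0}(\g c)$ to the centraliser $\g z_{\g r}(\g c)$ and turns $\g n$ into an honest direct product $\g c\times\g r_{0}$; without this observation $\g n$ would merely be solvable and there would be no reason for its Cartan subalgebras to decompose compatibly with $\alg=\g s\oplus\g r$. The remaining ingredient --- a $\mathbf{G}$-invariant Cartan subalgebra of the solvable $\g r_{0}$ --- is furnished by the solvable case, and the properties of $\alg^{0}(\g c)$ used above are the standard ones recalled in \cite{Bo}, Ch.~VII, \S\S1--2.
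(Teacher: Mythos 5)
Your proposal is correct and follows essentially the same route as the paper: the paper takes $\g c$ to be the centralizer (its ``commutator'') of the invariant Cartan subalgebra of $\g s$ in $\alg$, applies the solvable-case lemma to the $\mathbf{G}$-invariant solvable algebra $\g c\cap\g r$, and takes the sum --- your Fitting-null component $\alg^{0}(\g c)$ collapses to exactly this centralizer by your semisimplicity observation, and you in addition verify the points the paper leaves implicit (that $\g n=\g c\times\g r_{0}$ splits as a direct product and that a Cartan subalgebra of $\g n$ containing $\g c$ is one of $\alg$). The only caveat is one you share with the paper's own proof: what is produced is a $\mathbf{G}$-invariant Cartan subalgebra of $\g r_{0}=\g z_{\g r}(\g c)$, not of the radical $\g r$ itself as the statement literally demands (and the literal reading cannot hold in general --- for $\g s\g l(2,\complexes)\ltimes\complexes^{2}$ every Cartan subalgebra is one-dimensional, so it can never contain a Cartan subalgebra of the radical), so this is a defect of the statement rather than a gap in your argument.
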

By hypothesis $\g s$ admits a $\mathbf{G}$-invariant Cartan subalgebra $\g h_{\g s}$, let $\g c$ be the commutator in $\alg$ of $\g h_{\g s}$, which is also $\mathbf{G}$-invariant.
Consider a $\mathbf{G}$-invariant csa $\g h_{\g{r}}$ of $\g c \cap \g{r}$, which exists by the preceding lemma. Take $\g h:=\g h_{\g s}\oplus \g h_{\g{r}}$, which has the required properties.

\begin{lem}
Let $\g s$ be a semisimple Lie algebra and $\dim \g s=n$.
If $\mathbf{G}$ is a hypersolvable group of automorphisms of $\alg$, and if any Lie algebra $\alg$ with $\dim \alg<n$ admits a $\mathbf{G}$-invariant Cartan subalgebra, then $\g s$ admits a $\mathbf{G}$-invariant Cartan subalgebra.
\end{lem}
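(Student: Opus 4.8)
The plan is to argue by induction on $n=\dim\g s$, the inductive hypothesis being exactly the assertion that every Lie algebra of dimension $<n$, equipped with an arbitrary hyper-solvable group of automorphisms, carries an invariant Cartan subalgebra. As a preliminary reduction I would replace $\mathbf G$ by its closure in $\Aut(\g s)$: this only strengthens the required invariance, the closure is again hyper-solvable, and by the structure of hyper-solvable groups it is a compact Lie group whose identity component $\mathbf G^{\circ}$ is a torus. So from now on $\mathbf G\subset\Aut(\g s)$ is a compact Lie group, and the argument runs in three stages: reduce to $\g s$ simple, eliminate $\mathbf G^{\circ}$, and finally treat finite $\mathbf G$.

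\emph{Reduction to the simple case, and elimination of $\mathbf G^{\circ}$.} If $\g s=\g s_1\oplus\dots\oplus\g s_k$ with $k\ge 2$ simple ideals, $\mathbf G$ permutes them; if not transitively, $\g s$ splits into two proper $\mathbf G$-invariant ideals of dimension $<n$ and the inductive hypothesis applied to each summand (and the direct sum of the resulting Cartan subalgebras) finishes; if transitively, one produces a Cartan subalgebra $\g h_1$ of $\g s_1$ invariant under the finite-index — hence again hyper-solvable — subgroup $\mathrm{Stab}_{\mathbf G}(\g s_1)$ (using the inductive hypothesis, as $\dim\g s_1<n$), and then $\sum_{g\in\mathbf G/\mathrm{Stab}_{\mathbf G}(\g s_1)}g\,\g h_1$ is the desired Cartan subalgebra. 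So assume $\g s$ simple. If $\mathbf G^{\circ}$ acts non-trivially, set $\g m:=\g s^{\mathbf G^{\circ}}$. Since $\Aut(\g s)/\mathrm{Int}(\g s)$ is finite the connected group $\mathbf G^{\circ}$ is inner, so it lies in a maximal torus $T$ of $\mathrm{Int}(\g s)$; as $T$ acts trivially on the Cartan subalgebra $\mathrm{Lie}(T)$, the subalgebra $\g m$ contains a Cartan subalgebra of $\g s$ and hence has the full rank of $\g s$, while $\g m\subsetneq\g s$ because $\g s$ is simple and $\mathbf G^{\circ}\ne\{1\}$. Since $\mathbf G^{\circ}$ is normal, $\mathbf G$ preserves $\g m$ and acts on it through the finite hyper-solvable group $\mathbf G/\mathbf G^{\circ}$; as $\dim\g m<n$ the inductive hypothesis gives a $\mathbf G$-invariant Cartan subalgebra of $\g m$, which, being of full rank in $\g s$, is a Cartan subalgebra of $\g s$. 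Thus we are reduced to $\g s$ simple and $\mathbf G$ finite.

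\emph{The finite case.} Here hyper-solvability, rather than mere solvability, is indispensable: the solvable group $A_4\subset\Aut(\g{su}(2))$, for instance, stabilizes no Cartan subalgebra. The plan is to use that a nontrivial finite hyper-solvable group contains a normal subgroup $\langle\gamma\rangle$ cyclic of prime order (if $\mathbf G=\{1\}$ there is nothing to prove), where $\gamma$ is a finite-order — hence semisimple — automorphism and $\g s^{\gamma}$ is a nonzero proper subalgebra of $\g s$, reductive in $\g s$. By the theorem of Borel and Mostow on semisimple automorphisms, for any Cartan subalgebra $\g a$ of $\g s^{\gamma}$ the centralizer $\g z_{\g s}(\g a)$ is a Cartan subalgebra of $\g s$. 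Because $\langle\gamma\rangle$ is normal, $g\,\g s^{\gamma}=\g s^{g\gamma g^{-1}}=\g s^{\gamma}$ for all $g\in\mathbf G$, so $\mathbf G$ acts on $\g s^{\gamma}$ through the finite hyper-solvable quotient $\mathbf G/\langle\gamma\rangle$; since $\dim\g s^{\gamma}<n$, the inductive hypothesis furnishes a $\mathbf G$-invariant Cartan subalgebra $\g a$ of $\g s^{\gamma}$, and then $\g z_{\g s}(\g a)$ is a Cartan subalgebra of $\g s$ that is $\mathbf G$-invariant because $\g a$ is. This closes the induction.

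The step I expect to be the main obstacle is this finite case: it is where the difference between hyper-solvable and solvable genuinely bites (one must really exploit the normal cyclic subgroup), and where one cannot assume $\gamma$ inner — for an outer $\gamma$ the fixed subalgebra $\g s^{\gamma}$ has \emph{strictly smaller} rank than $\g s$, which defeats any naive rank argument and forces the precise Borel--Mostow statement (that a Cartan subalgebra of $\g s^{\gamma}$ has a Cartan subalgebra of $\g s$ as its centralizer) in order to lift an invariant Cartan subalgebra of $\g s^{\gamma}$ to one of $\g s$. Over $\corps=\reels$ one additionally needs the real form of this statement, or else first complexifies, obtains a $\mathbf G$-invariant complex Cartan subalgebra, and descends using that $\mathbf G$ commutes with complex conjugation.
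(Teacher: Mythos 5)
Your core argument is, at bottom, the same as the paper's (which follows Bourbaki, Ch.~VII \S 3, ex.~11--13): pick a normal cyclic subgroup of the hypersolvable group, note that its fixed subalgebra is a nonzero, proper, $\mathbf{G}$-invariant subalgebra, get a $\mathbf{G}$-invariant Cartan subalgebra $\g a$ there by the dimension induction, and pass to the centralizer of $\g a$ in $\g s$. The one genuine divergence is the finish: the paper applies the induction hypothesis a \emph{second} time to the proper $\mathbf{G}$-invariant subalgebra $\g c=\mathfrak{z}_{\g s}(\g a)$ and then uses that a Cartan subalgebra of $\g c$ is one of $\g s$ (this is why it records that $\g h_1$ contains the center $\g h_0$ of $\g c$), whereas you short-circuit this by quoting the Borel--Mostow fact that $\mathfrak{z}_{\g s}(\g a)$ is already a Cartan subalgebra; that is legitimate (your $\gamma$ is of prime order, hence semisimple with nonzero fixed algebra, a point the paper leaves implicit), and it buys a cleaner last step at the price of importing a classical theorem, while the paper's version stays entirely inside the induction and needs no prime-order choice. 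Your extra scaffolding is superfluous rather than wrong: the reduction to $\g s$ simple is unnecessary (the fixed-algebra argument works verbatim for semisimple $\g s$, and the stabilizer is hypersolvable because subgroups of hypersolvable groups are, not because of finite index), and the preliminary passage to the closure of $\mathbf{G}$ rests on a claim that is false as stated --- the closure of a hypersolvable group of automorphisms need not be compact nor have a torus as identity component (consider the infinite cyclic group generated by $\exp(\ad\, e)$, $e$ nilpotent in $\mathfrak{s}\mathfrak{l}_2$, which stabilizes no Cartan subalgebra at all). This does not hurt you only because ``hypersolvable'' here, as in the Bourbaki exercises the paper follows and in the application $\mathbf{G}=\langle\sigma,\theta\rangle$, means a finite supersolvable group, so the closure is $\mathbf{G}$ itself and your $\mathbf{G}^{\circ}$ stage is vacuous; the stated justification should simply be dropped. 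Two smaller remarks: your induction hypothesis quantifies over arbitrary hypersolvable groups acting on smaller algebras (you need this for $\mathrm{Stab}_{\mathbf{G}}(\g s_1)$ and for quotients), a mild strengthening of the lemma's literal wording but exactly how Lemma~\ref{ginv} is organized; and over $\reels$ the complexify-and-descend alternative needs the conjugation built into the acting group (or the real form of Borel--Mostow), since a $\mathbf{G}$-invariant complex Cartan subalgebra need not be conjugation-stable.
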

Like in ex. 12 consider a cyclic subgroup $\mathbf{C}$ of $\mathbf{G}$ which fixes a proper subalgebra $\g h$ of $\alg$. If such a $\mathbf{C}$ does not exist, take any csa of $\alg$ and we are finished.
Otherwise, by induction $\g h$ admits a $\mathbf{G}$-invariant csa $\g h_0$. The commutator $\g c$ of $\g h_0$ in $\alg$ is $\mathbf{G}$-invariant and strictly included in $\alg$. It admits by induction a $\mathbf{G}$-invariant csa $\g h_1$. The latter is also a csa of $\alg$.

Note that $\g h_1$ contains necessarily the center of $\g c$ which is $\g h_0$.
This finishes the proof of lemma 2.

To prove theorem~\ref{csa} we take $\mathbf{G}=\langle\sigma,\theta\rangle$ and make the following observations:
Starting from a semisimple extrinsic quadruple $(\alg, \tialg, \sigma, \theta)$ for which the rank of the isotropy algebra $\tih$ equals the rank of $\tialg$, we can chose a csa $\tih_0$ of $\tialg$ contained in $\tih$. The csa $\tih_0$ is $\mathbf{G}$-invariant as $\tih$ is contained in $\g h_+$. Consider the commutator $\g c$ of $\tih_0$ which is a proper $\mathbf{G}$-invariant subalgebra of $\alg$. By lemma~\ref{ginv} the Lie algebra $\g c$ admits a $\mathbf{G}$-invariant csa $\g h_0$ which contains $\tih_0$. $\g h_0$ is a $\mathbf{G}$-invariant csa of $\alg$.
\end{proof}

\section{The symplectic case}
\subsection{Definitions}
If $(\g g, \sigma)$ is a symmetric pair and $\omega: \alg \times \alg \to \corps$ a non-degenerate skew-symmetric bilinear form such that $\omega(\g h,\cdot)=0$ and
$\omega$ is $ad_{\g h}$-invariant, $(\g g, \theta, \omega)$ is then called a {\em symplectic} symmetric triple.

As before a symplectic symmetric triple is called {\em decomposable} if it is isomorphic to the direct product of two nontrivial symplectic symmetric triples, {\em undecomposable} otherwise.

A semisimple symmetric pair $(\alg,\sigma)$ is called {\em (pseudo\footnote{pseudo refers to the case when the Killing form $B$ is not positive definite}-, para-) hermitian} if there is an $\ad_{\g h}$-invariant endomorphism $J$ of $\g p$ such that $B(Ju,v)+B(u,Jv)=0$ for $u,v\in\g p$ and such that $J^2=\varepsilon\Id$, where $\varepsilon=-1$ in the (pseudo-) hermitian case and $\varepsilon=1$ in the para-hermitian case and $B$ is the Killing form of $\alg$.

\subsection{Facts on symplectic semi-simple symmetric triples}
Let us recall some facts about semisimple symplectic symmetric triples from \cite{Bi}.

A semisimple symplectic symmetric triple is undecomposable if and only if it is simple.

A simple symmetric triple $(\g g, \theta, \omega)$ is symplectic if and only if the dimension (w.r. to $\corps$) of the center of $\g h$ is $1$. In this case there is always an element $Z$ in the center of $\g h$ such that $\omega(X,Y)=-B(Z, [X,Y])$ for $X,Y\in \g p$.
For a simple symplectic symmetric triple $(\g g, \theta, \omega)$ there is a csa $\g h_0$ of $\g g$ contained in $\g h$ and containing $Z$. In particular $\alg$ and $\g h$ have the same rank.

We suppose now that $\corps=\complexes$.
With respect to the csa $\g h_0$ there is a system of roots $\Phi\subset\g h_0^*$, a system of simple roots $\Delta\subset\Phi$ and $\alpha_0\in\Delta=\{\alpha_i \, | \, i=0\ldots n-1\}$ with the following properties: If we denote by $(h_{\alpha_i})_i\in \g h_0^n$ the dual base of $(\alpha_i)_i$, there is a scalar $r\in\corps$ such that $Z=r h_{\alpha_0}$ and $\g h=\corps h_{\alpha_0} \oplus \g s$, where $\g s$ is a semi-simple Lie algebra with csa $\g h_{\g s}:=h_{\alpha_0}^\perp\cap \g h_0$ and simple roots $\{\alpha_1\,\vrule\,_{\g h_{\g s}}, \ldots, \alpha_{n-1}\,\vrule\,_{\g h_{\g s}}\}$. Furthermore every root $\alpha$ of $\g g$ can be written $\alpha=n_0\alpha_0+\sum_{i>0} n_i \alpha_i$ with $n_0=0$ or $\pm 1$. We call $(\g h_0, \Delta, \alpha_0, r)$ a {\em defining quadruple} of the simple symplectic triple $(\g g, \theta, \omega)$.
In the case $r=1$ we call $\omega$ the {\em canonical symplectic form} associated to the simple symplectic symmetric triple $(\g g, \theta, \omega)$. The notion generalizes in an obvious way to semi-simple symplectic symmetric triples.

A real absolutely simple symplectic symmetric triple is either (pseudo-) hermitian or para-hermitian (with $J\in\corps\ad_Z$), a real pseudo-complex symplectic symmetric triple is both.
If the triple is (pseudo-)hermitian (resp. para-hermitian) there is a maximally compact (resp. maximally non compact) (see \cite{Bi}, \cite{Knapp}) csa containing the center of the isotropy algebra and contained in the isotropy algebra.

\subsection{The extrinsic setting}
\subsubsection{Definitions}
Symplectic extrinsic symmetric spaces have been studied in \cite{R}.
If $(\alg, \tialg, \sigma, \theta)$ is an extrinsic symmetric quadruple, and there is a non-degenerate skew-symmetric form $\omega$ such that $(\alg, \sigma, \omega)$ and $(\tialg, \theta\,\vrule\,_{\tialg}, \tilde{\omega})$, with $\tilde{\omega}(X+\lambda(X),Y+\lambda(Y))=\omega(X,Y)$ for $X,Y\in\g p_-$, are symplectic symmetric triples, we call $(\alg, \tialg, \sigma, \theta, \omega)$ then a {\em symplectic} extrinsic symmetric quintuple.

\subsubsection{Results in the complex case}\label{resultscomplex}
In \cite{NT} H. Nakagawa and R. Takagi classify the compact minimal extrinsic Kähler symmetric spaces embedded in the complex projective space. It follows from the work of H. Naitoh (\cite{Na1}-\cite{Na6}) that these are the only compact Kähler extrinsic symmetric spaces. By Theorem \ref{complextorealform} we will obtain the complete list of complex semisimple symplectic extrinsic symmetric quintuples. In the complex case consequently the outer symmetric pair is necessarily of the form $\g s \g l(N+1,\complexes)/\g g\g l (N,\complexes)$.

Let $(\alg, \tialg, \sigma, \theta, \omega)$ be a complex semi-simple symplectic extrinsic symmetric quintuple. Throughout this section we will suppose that $(\alg,\sigma)$ corresponds to the symmetric pair $\g s \g l(N+1,\complexes)/\g g\g l(N,\complexes)$.

$\tilde{\mathcal{T}}:=(\tialg, \theta, \omega\,\vrule\,_{\tialg\times\tialg})$ is a semi-simple symplectic symmetric triple. Denote by $\tih$ the isotropy algebra.
We can choose an invariant csa $\tih_0\subset\tih$ containing the center of $\tih$.
Let for the given $\tih_0$ be a defining quadruple $(\tih_0, \tilde\Delta, \tilde\alpha_0, \tilde r)$ of $\tilde{\mathcal{T}}$.
By the preceding section is easy to see that we can extend $\tih_0$ to a $\theta$-invariant csa $\g h_0$ of $\g h$. As $(\alg, \sigma, \omega)$ is a simple symplectic triple we see that $\g h_0$ is also a csa of $\alg$. Denote $Z$ a generator of the center of $\g h$. If not $Z\in\g h_0$, we could extend $\g h_0$ to a bigger csa of $\g h$ which is impossible. We see that either
$Z\in\g h_+$ or $Z\in\g h_-$.
We call $\g h_0$ in this case a {\em symplectic extrinsic Cartan subalgebra} associated to the data $(\alg, \tialg, \sigma, \theta, \omega)$.

Note that, as $\tih_0 \subset \g h_0$, the restriction of a weight $\omega$ from $\alg$ (w.r.t. $\g h_0$) to $\tih_0$ is a weight from $\tialg$ (w.r.t. $\tih_0$).

For the given $\g h_0$ let $(\g h_0, \Delta, \alpha_0, r)$ be a defining quadruple of the simple symplectic triple $(\g g, \theta, \omega)$. Let $(V,\rho)$ be the representation $\g g$ with highest weight being the fundamental weight $\omega_0$ associated to $\alpha_0$. By restriction of $(V,\rho)$, $(V,\rho\,\vrule\,_{\tialg})$ is a representation of $\tialg$. It is irreducible(why?).
To make sure that $\omega_0$ is invariant under the adjoint action of $\tih$ we will need that $\tilde\omega_0 := \omega_0\,\vrule\,_{\tih_0}$ is a multiple of the fundamental weight (w.r.t. $\tih_0$) associated to ${\tilde\alpha_0}$ (see \cite{NT}).

\begin{prop}
Let $(\alg, \tialg, \sigma, \theta, \omega)$ be a complex semi-simple symplectic extrinsic symmetric quintuple, with symplectic extrinsic csa $\g h_0$ and let $(\g h_0, \Delta, \alpha_0, r)$ (resp. $(\tialg \cap \g h_0, \tilde\Delta, \tilde\alpha_0, \tilde r)$) be a defining quadruple of the simple symplectic triple $(\g g, \sigma, \omega)$ (resp. $(\tialg, \theta, \tilde\omega)$). Let $\omega_0$ be the fundamental weight associated to $(\Delta,\alpha_0)$ and let $(V,\rho)$ be a representation of $\alg$ with highest weight $\omega_0$. Let $\tilde\omega_0$ be $\omega_0\,\vrule\,_{\tialg \cap \g h_0}$.

Let $S:=\{\tilde\omega_i\}$ denote the set of weights of the irreducible representation with highest weight ${\tilde\omega_0}$, and $S_j:=\{\tilde\omega\in S \,|\,  (\tilde\omega-\tilde\omega_0+j\tilde\alpha_0,\tilde\omega_0)=0\}$ for all $j$.

\begin{enumerate}[(i)]

\item If $\alg = \g p \oplus \g h$ is the Cartan decomposition corresponding to $\theta$,
and $V_+ := \bigoplus_{\tilde\omega_i\in \bigcup_n S_{2n}} V_{\tilde\omega_i}$ and $V_- := \bigoplus_{\tilde\omega_i\in \bigcup_n S_{2n+1}} V_{\tilde\omega_i}$

then $\rho(\g h) V_\pm \subset V_\pm$ and $\rho(\g p) V_\pm \subset V_\mp$.

\item We have that $S_0=\{\tilde\omega_0\}$ and $S=S_0\cup S_1\cup S_2$.
\end{enumerate}
\end{prop}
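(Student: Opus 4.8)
The plan is to carry the whole question over to the representation $(V,\rho)$ and its weight decomposition with respect to $\tih_0$, systematically exploiting that $\tilde\alpha_0$ is cominuscule for $\tialg$. As a preliminary reduction, since $\tilde\omega_0$ is a positive multiple of the fundamental weight $\tilde\varpi_0$ dual to $\tilde\alpha_0$, one has $(\tilde\alpha_i,\tilde\omega_0)=0$ for $i>0$; writing $\tilde\omega=\tilde\omega_0-\sum_i k_i\tilde\alpha_i$ with $k_i\in\relatifs_{\ge 0}$, the relation defining $S_j$ collapses to $k_0=j$, so $S_j$ is exactly the set of weights of $\tilde\alpha_0$-level $j$ and $V[j]:=\bigoplus_{\tilde\omega\in S_j}V_{\tilde\omega}$ is the $j$-th piece of the $\relatifs$-grading of $V$ induced by $\ad(\tilde Z)$. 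Cominuscularity of $\tilde\alpha_0$ gives $\tialg=\tip^-\oplus\tih\oplus\tip^+$ with $\tip^\pm$ the abelian $\ad(\tilde Z)$-eigenspaces for $\pm\tilde r$, so that $\rho(\tip^\pm)V[j]\subset V[j\mp1]$ and $\rho(\tih)V[j]\subset V[j]$; moreover $\rho(\tilde Z)$ acts on $V[j]$ as the scalar $\tilde\omega_0(\tilde Z)-j\tilde r$, and $V_+=\bigoplus_{j\ \mathrm{even}}V[j]$, $V_-=\bigoplus_{j\ \mathrm{odd}}V[j]$.

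For (i): restricted to $\tialg$ the involution $\theta$ has $+1$-eigenspace $\tih$ and $-1$-eigenspace $\tip=\tip^+\oplus\tip^-$, on which $\ad(\tilde Z)$ has eigenvalues $0$ and $\pm\tilde r$; hence $\theta|_{\tialg}=\Ad(\exp(\tfrac{\pi i}{\tilde r}\tilde Z))$. Let $\Theta$ be a linear automorphism of $V$ with $\Theta\rho(X)\Theta^{-1}=\rho(\theta X)$ for all $X\in\alg$ (it exists since $\theta$ is inner, and is unique up to a scalar by Schur), normalised so $\Theta^2=\Id$. Then $\Theta^{-1}\rho(\exp(\tfrac{\pi i}{\tilde r}\tilde Z))$ centralises $\rho(\tialg)$, hence is a scalar, and together with the scalar action of $\rho(\tilde Z)$ on $V[j]$ this forces $\Theta|_{V[j]}=\varepsilon(-1)^j$ with $\varepsilon$ independent of $j$. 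Thus $V_+$ and $V_-$ are the two eigenspaces of $\Theta$; since $\g h$, resp.\ $\g p$, is the $+1$-, resp.\ $-1$-eigenspace of $\theta$ on $\alg$, $\rho(\g h)$ commutes with $\Theta$ and $\rho(\g p)$ anticommutes with it, which yields $\rho(\g h)V_\pm\subset V_\pm$ and $\rho(\g p)V_\pm\subset V_\mp$.

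For (ii): $S_0=\{\tilde\omega_0\}$ is immediate — $\rho(\tip^+)$ annihilates $V[0]$, so $V[0]$ is the $\tih$-submodule generated by $v_{\tilde\omega_0}$, and the restriction of $\tilde\omega_0$ to a Cartan subalgebra of the semisimple part of $\tih$ vanishes (because $\tilde\omega_0\in\complexes\tilde\varpi_0$), whence $V[0]=\complexes v_{\tilde\omega_0}$. The assertion $S=S_0\cup S_1\cup S_2$ is equivalent to $V[j]=0$ for $j\ge 3$, i.e.\ to the $\tilde\alpha_0$-grading of $V$ having length $d\le 2$. Here one invokes the reduction recalled before the proposition: $(\alg,\sigma)$ is $\g s\g l(N+1,\complexes)/\g g\g l(N,\complexes)$ and $(V,\rho)$ is its standard module $\complexes^{N+1}$, which is faithful and minuscule, so the grading of $V$ by $\ad(Z)$, for $Z$ a generator of the centre of $\g g\g l(N,\complexes)$, has only two steps, one of them one-dimensional. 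One first checks $Z\in\alg_+$: $\rho(Z)$ preserves each $V[j]$ (as $Z\in\g h_0$), while if $Z\in\alg_-$ then $\rho(Z)$ anticommutes with $\Theta$ and therefore maps each $V[j]$ into the sum of the pieces of opposite parity, forcing $\rho(Z)=0$, impossible by faithfulness. One then compares the two gradings of $V$ — the outer one of length $1$, the inner one of length $d$ — and, using the relation between $Z$ and $\tilde Z$ inside $\g h_0$ together with $\rho(\tip^\pm)\subset\alg_-$ and $[\tip^+,\tip^-]=\tih$, pins down the admissible weights $\tilde\omega_0$, all of which satisfy $d\le 2$; this is consistent with, and can also be read off from, the classification of Nakagawa--Takagi \cite{NT} (cf.\ \cite{Na1}--\cite{Na6}).

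I expect this last bound $d\le 2$ to be the genuine obstacle. The purely representation-theoretic hypotheses (irreducibility of $V$ over $\tialg$, $\tilde\alpha_0$ cominuscule, $\tilde\omega_0$ a multiple of $\tilde\varpi_0$) do not by themselves force it — for instance the representation of $\g s\g p(6,\complexes)$ of highest weight $\varpi_3$ carries a length-$3$ grading of exactly this shape — so one really has to use that $\tialg=\tip\oplus\tih$ is embedded in $\alg$ compatibly with both $\sigma$ and $\theta$ (equivalently, that the conditions on $\lambda$ hold), or else to appeal to the explicit classification.
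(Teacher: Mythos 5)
Your reduction to the $\tilde\alpha_0$-grading and your treatment of (i) and of $S_0=\{\tilde\omega_0\}$ are fine (for (i) you invoke more machinery than needed --- an intertwiner $\Theta$ for $\theta$, which presupposes that $\theta$ is inner on $\g s\g l(V)$, and Schur applied to $V\,\vrule\,_{\tialg}$, whose irreducibility the paper itself only asserts --- but the parity argument goes through). The genuine gap is exactly where you say you expect it: you never prove $S_j=\emptyset$ for $j\ge 3$. Your sketch (``compare the two gradings \ldots pins down the admissible weights'') is not an argument, and your fallback on the Nakagawa--Takagi classification changes the logical role of the proposition, which in the paper is part of the structural analysis feeding the classification rather than a consequence of it. Your counterexample ($\g s\g p(6,\complexes)$ with third fundamental weight) correctly shows that the purely representation-theoretic hypotheses cannot suffice, so an extrinsic input is mandatory --- but you stop at diagnosing this instead of supplying it.

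The paper closes the gap with a short direct argument using the map $\lambda$, i.e.\ the extrinsic structure you listed but did not use. Since $X\mapsto X+\lambda(X)$ identifies $\g p_-$ with $\tip$ equivariantly for $\tih$, the $\tih_0$-weights of $\g p_-$ are restrictions of roots of $\tialg$ occurring in $\tip$, and each such root carries $\tilde\alpha_0$ with coefficient $\pm 1$. Combining this with the identification $\g p_-=V_{\tilde\omega_0}^*\otimes V_-$ (available because the outer pair is $\g s\g l(N+1,\complexes)/\g g\g l(N,\complexes)$ acting on its standard module), every weight of $V_-$ has the form $\tilde\omega_0+\tilde\alpha$ with $\tilde\alpha$ as above, hence lies in $S_1$; in particular $S_3=\emptyset$, and the usual unbroken-string property of the level sets of a weight system then forces $S_j=\emptyset$ for all $j\ge 3$. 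So the missing idea is to exploit $\g p_-\cong\tip$ together with the $\pm1$-coefficient property of $\tilde\alpha_0$, rather than to appeal to the classification.
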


\begin{proof}
The first assertion is more or less clear due to the fact that the action of $\g p$ exchanges weights from $\bigcup_n S_{2n}$ and $\bigcup_n S_{2n+1}$ and that the action of $\g h$ permutes weights inside $\bigcup_n S_{2n}$ and $\bigcup_n S_{2n+1}$.

That $S_0=\{\tilde\omega_0\}$ has been observed in \cite{NT}.

We have that $\g p_-=V_{\tilde\omega_0}^*\otimes V_-$.
We will prove that $S_3$ is empty, and it follows from classical weight-theory that then $S_j$ is empty for all $j\ge 3$.
Suppose that $S_3$ contains a weight $\tilde\omega$. As $\tip := \langle X + \lambda(X) \,|\, X \in \g p_-\rangle$, there is a root $\tilde\alpha$ from $\tip$ such that $\tilde\omega=\tilde\omega_0 +\tilde\alpha$. But $\tilde\alpha$ contains the summand $\tilde\alpha_0$ only $\pm 1$ times, a contradiction with $\tilde\omega\in S_3$.
\end{proof}
\begin{rem}
Note that $V_1$ identifies to the tangent space $T_o \tilde M$ and $V_2$ identifies to the normal space $\mathcal{N}_o \tilde M$.
Another way of describing the situation is then to say $V=\complexes \oplus T_o \tilde M \oplus \mathcal{N}_o \tilde M$ as $\tilde{\g h}$-modules.
\end{rem}

\begin{theo}\label{big_algebra}
Let $(\g s, \sigma, \omega)$ be a simple symplectic triple with isotropy algebra $\g h$, and let $(\g h_0, \Delta, \alpha_0, r)$ be a defining quadruple of $(\g s, \sigma, \omega)$.

We can decompose $\g h$ into $\complexes \oplus \tilde{\g g}$ where $\tilde{\g g}=\bigoplus_i \tilde{\g g}_i$ with $\tilde{\g g}_i$ simple and simple root system $\tilde{\Delta}_i$.
Let $\tilde{\g h}_0$ be the csa of $\tilde{\g g}$ obtained by intersecting $\g h_0$ with $\tilde{\g g}$.

Let $\{{\alpha}_0^i\}$ be the subset of $\Delta$ containing the roots not orthogonal to $\alpha_0$ such that
the restriction $\tilde{\alpha}_0^i$ of ${\alpha}_0^i$ to $\tilde{\g h}_0$ is in $\tilde{\Delta}_i$.
Each $(\tilde{\g g}_i \cap \tilde{\g h}_0, \tilde{\Delta}_i, \tilde{\alpha}_0^i, r_i)$ defines then a symplectic triple
$(\tilde{\g g}_i,\sigma_i,\omega_i)$.

Let $V$ the $\tilde{\g g}$-module with highest weight $\tilde{\alpha}_0 := \alpha_0\,\vrule\,_{\tilde{\g h}_0}$.
Then $V$ decomposes into $V_0 \oplus V_1 \oplus V_2$ as a $\tilde{\g h}$-module
where the weights of $V_j$ are
exactly the restrictions of the roots $\alpha$ of $\g s$ to $\tilde{\g h}_0$ such that $\alpha$ written as $\sum_k n_{\alpha_k} \alpha_k$ with $\alpha_k\in\Delta$ verifies $n_{\tilde{\alpha}_0}=1$ and $\sum_i n_{\tilde{\alpha}_0^i}=j$.

Let $\g g$ be $\g s \g l (V)$, $\sigma$ be the involution $\Ad(I_{V_0,V_1\oplus V_2})$, $\theta$ be the involution $\Ad(I_{V_0\oplus V_1,V_2})$. Identify $\tilde{\g g}$ with its representing subalgebra in $\g s \g l (V)$.
Then $(\alg, \tialg, \sigma, \theta)$ is a semisimple symplectic extrinsic quadruple.

Reciprocally every undecomposable semisimple symplectic extrinsic quadruple can be obtained this way.
\end{theo}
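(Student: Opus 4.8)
The plan is to prove the two assertions separately, both via the $\relatifs$-grading of $\g s$ attached to $\alpha_0$. For the direct implication, note that the defining-quadruple property $n_0\in\{0,\pm1\}$ says precisely that $\alpha_0$ is cominuscule, so one has a grading $\g s=\g s_{-1}\oplus\g s_0\oplus\g s_1$ with $\g s_0=\g h$ and $\g s_{\pm1}=\g p^{\pm}$ abelian. I would identify $V$ with $\g s_1$ as a $\tilde{\g g}$-module and the splitting $V=V_0\oplus V_1\oplus V_2$ with the refinement of this grading by $d(\beta)=\sum_i n_{\alpha_0^i}(\beta)$; that only three layers appear is exactly the vanishing of $S_j$ for $j\ge 3$ in the preceding Proposition, and the bottom layer $V_0=\g g_{\alpha_0}$ is a line. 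The two block involutions $\sigma=\Ad(I_{V_0,V_1\oplus V_2})$ and $\theta=\Ad(I_{V_0\oplus V_1,V_2})$ are simultaneously diagonal in a basis adapted to $V_0\oplus V_1\oplus V_2$, hence commute, and $\dim V_0=1$ makes $(\g g,\sigma)$ the Hermitian pair $\g s\g l(N+1,\complexes)/\g g\g l(N,\complexes)$; in this block form the joint $\sigma$--$\theta$ eigenspaces $\g p_\pm$, $\g h_\pm$ are the obvious spaces of block homomorphisms.

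Now embed $\tialg$ in $\g s\g l(V)$ through $V$. Because $\g s$ is simple the neighbours of $\alpha_0$ lie in distinct simple factors $\tilde{\g g}_i$ of $\tilde{\g g}$, each cominuscule there, so $d$ takes only the values $0,\pm1$ on roots of $\tilde{\g g}$ and $\tilde{\g g}=\tilde{\g g}^{[-1]}\oplus\tilde{\g g}^{[0]}\oplus\tilde{\g g}^{[1]}$; thus $\tih:=\tilde{\g g}^{[0]}=\tilde{\g g}\cap\alg_+\subset\g h$ while $\tip:=\tilde{\g g}^{[1]}\oplus\tilde{\g g}^{[-1]}=\tilde{\g g}\cap\alg_-$. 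The crucial point, and the only one that is not bookkeeping, is that $V_0$ being a line forces the projection of $\tip$ onto $\g p_-$ along $\g h_-$ to be an isomorphism: the action of $\tilde{\g g}^{[1]}$ on a generator of $V_0$, together with its transpose for $\tilde{\g g}^{[-1]}$, identifies $\tip$ with $\g p_-$, and both the surjectivity and the injectivity follow from the weight combinatorics of the cominuscule module $\g s_1$ — in particular from $\langle\alpha_0^i,\alpha_0^{\vee}\rangle=-1$ for the neighbours of a cominuscule node, which makes $\beta-\alpha_0$ a root of $\tilde{\g g}$ for the relevant weights $\beta$ of $\g s_1$. This identification supplies the (necessarily unique) map $\lambda$, and then the relations $\tih=[\tip,\tip]\subset\g h$ and $[[\tip,\tip],\tip]\subset\tip$ merely record that $(\tilde{\g g},\theta\!\mid_{\tilde{\g g}})$ is a symmetric pair with $(-1)$-eigenspace $\tip$, which holds factor by factor; $\g g=\g s\g l(V)$ is simple and $\tialg$ semisimple. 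Finally the symplectic datum: $\dim V_0=1$ gives $\g h$ a one-dimensional centre, hence a symplectic form $\omega$ on $(\g g,\sigma)$; on $\tialg=\bigoplus_i\tilde{\g g}_i$ the quadruples $(\tilde{\g g}_i\cap\tilde{\g h}_0,\tilde\Delta_i,\tilde\alpha_0^i,r_i)$ define the simple symplectic triples $(\tilde{\g g}_i,\sigma_i,\omega_i)$, and one checks $\theta\!\mid_{\tialg}=\bigoplus_i\sigma_i$ and that $\omega\!\mid_{\tialg\times\tialg}$ is compatible with $\bigoplus_i\omega_i$ under the normalisation $\tilde\omega(X+\lambda(X),Y+\lambda(Y))=\omega(X,Y)$, the $\tilde\alpha_0^i$ and $r_i$ being those extracted from $(\g h_0,\Delta,\alpha_0,r)$. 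Hence $(\g g,\tialg,\sigma,\theta)$ is a semisimple symplectic extrinsic quadruple.

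For the converse, start from an undecomposable semisimple symplectic extrinsic quadruple $(\alg,\tialg,\sigma,\theta)$. By the results recalled in Section~\ref{resultscomplex} (Nakagawa--Takagi \cite{NT}, Naitoh \cite{Na1}--\cite{Na6}) the outer pair must be $\g s\g l(N+1,\complexes)/\g g\g l(N,\complexes)$, so $\alg=\g s\g l(W)$ and $\sigma=\Ad(I_{W_0,W'})$ with $\dim W_0=1$; by Theorem~\ref{csa} a symplectic extrinsic Cartan subalgebra exists, $W\!\mid_{\tialg}$ is irreducible, and by the Proposition $W=V_0\oplus V_1\oplus V_2$ is the corresponding layered decomposition, $V_0=W_0$, $\theta$ the attached block involution. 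I would then decompose the inner triple $(\tialg,\theta\!\mid_{\tialg},\omega\!\mid_{\tialg})=\bigoplus_i(\tilde{\g g}_i,\sigma_i,\omega_i)$ into simple symplectic triples with defining quadruples $(\tilde{\g h}_0^i,\tilde\Delta_i,\tilde\alpha_0^i,r_i)$ and reconstruct $\g s$ as the Lie algebra whose Dynkin diagram is $\bigsqcup_i\tilde\Delta_i$ together with one extra node $\alpha_0$, joined to each $\tilde\alpha_0^i$ by the edge dictated by the highest weight of $W\!\mid_{\tilde{\g g}_i}$, the scalar $r$ being recovered from the centre generator $Z$ and the $r_i$; equivalently $\g s:=\complexes\oplus\tialg\oplus W\oplus W^{*}$ with the natural graded bracket. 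It remains to verify that this enlarged diagram is a connected simple Dynkin diagram, that $\alpha_0$ is cominuscule in it, and that the construction of the first part applied to the resulting $\g s$ returns $(\alg,\tialg,\sigma,\theta)$ up to isomorphism.

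The hard part will be this last reconstruction: proving that the inner symplectic data \emph{always} extends to a simple symplectic triple — that the enlarged Dynkin diagram is legitimate and $\alpha_0$ cominuscule in it, equivalently that the graded bracket on $\complexes\oplus\tialg\oplus W\oplus W^{*}$ satisfies the Jacobi identity and gives a simple algebra. This is not formal (it fails for a generic irreducible $\tialg$-module) and holds here only by virtue of the constraints of Section~\ref{resultscomplex} and the Proposition; it is precisely at this step that the Nakagawa--Takagi classification of parallel K\"ahler (equivalently, cominuscule) submanifolds of complex projective space is genuinely used.
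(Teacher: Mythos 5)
Your proof of the \emph{direct} sense takes a genuinely different route from the paper's. The paper argues by recognition: delete $\alpha_0$ from the diagram of $\g s$, observe that the marked components are simple symplectic triples with their canonical forms, and then check \emph{case by case} that the module $V$ is one of the six entries of the list in Section~\ref{complex_classif}, so the constructed quadruple is extrinsic because it already occurs in the Nakagawa--Takagi classification \cite{NT}; the converse is the observation that every list entry arises this way. You instead work structurally with the cominuscule grading, identify $V$ with $\g s_1$, build $\lambda$ from the isomorphism $\tip\to\g p_-$ (your injectivity/surjectivity argument via $\langle\alpha_0^i,\alpha_0^\vee\rangle=-1$ is correct), and verify the quadruple axioms directly --- a more conceptual argument that avoids the list in this direction. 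Two caveats. First, the three-layer decomposition $V=V_0\oplus V_1\oplus V_2$ cannot be quoted from the Proposition of Section~\ref{resultscomplex}: its proof already uses the extrinsic data ($\tip$, $\lambda$) that you are in the course of constructing. Prove it directly, e.g. by the $\alpha_0$-string: for a root $\gamma$ with $n_{\alpha_0}(\gamma)=1$ neither $\gamma+\alpha_0$ nor $\gamma-2\alpha_0$ is a root, which together with $\langle\alpha_0^i,\alpha_0^\vee\rangle=-1$ forces $\sum_i n_{\alpha_0^i}(\gamma)\le 2$. Second, with the involution literally as stated, $\Ad(I_{V_0\oplus V_1,V_2})$, the subalgebra $\tialg$ is not $\theta$-invariant (a degree-$(+1)$ element has components in both $\mathrm{Hom}(V_0,V_1)$ and $\mathrm{Hom}(V_1,V_2)$, and only one of them changes sign); the involution for which your identities $\tih=\tialg\cap\alg_+$, $\tip=\tialg\cap\alg_-$ hold is $\Ad(I_{V_0\oplus V_2,V_1})$, i.e. $-\Id$ exactly on the tangent block $V_1$. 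You reproduce the statement's labelling without noticing that your own computation needs the corrected $\theta$.

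The \emph{converse}, however, is a genuine gap: what you give is a plan, not a proof. You reduce to outer pair $\g s\g l(N+1,\complexes)/\g g\g l(N,\complexes)$, propose to rebuild $\g s$ as $\complexes\oplus\tialg\oplus W\oplus W^*$ with a graded bracket (equivalently, to re-attach the node $\alpha_0$ to $\bigsqcup_i\tilde\Delta_i$), and then yourself flag the verification that this bracket satisfies Jacobi, yields a simple algebra, and makes $\alpha_0$ cominuscule as ``the hard part'' --- and you never carry it out, saying only that the Nakagawa--Takagi classification is ``genuinely used'' there, without saying how. That verification \emph{is} the reverse implication. The paper closes it much more cheaply: since Section~\ref{resultscomplex} already reduces every undecomposable complex semisimple symplectic extrinsic quadruple to the finite list of Section~\ref{complex_classif}, it suffices to exhibit, for each of the six entries, a simple symplectic triple producing it ($C_{n+1}$ with the long simple root for $S^2\complexes^{n+1}$, $D_{n+1}$ with a spinor node for $\Lambda^2\complexes^{n+1}$, $\g s\g o(n+3,\complexes)$ with $\alpha_1$ for the quadric, $E_6$ for the halfspin, $E_7$ for $\complexes^{27}$, and $A_{a+b+2-1}$ with the node $a+1$ for $\complexes^{a+1}\otimes\complexes^{b+1}$). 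Either perform this finite matching, or actually prove the extension/Jacobi statement you defer; as written, the reverse direction is not established.
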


\begin{proof}
The proof of the direct sense goes by reasoning as follows:
Consider the Dynkin diagram of the transvection algebra $\g s$ of a complex simple symplectic symmetric triple together with the distinguished root $\alpha_0$. Consider $\tilde{\alpha}_0^i$ the vertices connected to $\alpha_0$ in the diagram. By removing $\alpha_0$ and all incident edges one obtains one or two connected diagrams, each one with a distinguished vertex $\tilde{\alpha}_0^i$.
One can verify that these diagrams correspond to simple symplectic symmetric triples $(\tialg_i,\sigma_i,\omega_i)$ together with their canonical symplectic form.
Define $V$ as in the theorem. One can then check in case per case that the representation $V$ is exactly the one given in the list from section \ref{complex_classif}.
Reciprocally every example of the list appears in this way, which shows the reverse sense.

Note that strictly speaking we should identify $\tialg$ with a subalgebra of $\g s \g l(V \oplus V^*)$,
but then $\tialg$ is contained in $$\left\{\left(\begin{matrix} A & 0\\ 0 & -^t{A}\end{matrix}\right) \, | \, A \in \g s \g l(V)\right\}$$
which identifies to $\g s \g l(V)$ as a Lie algebra.

\end{proof}

%

\section{Real forms}
\subsection{Definitions}
We say that the symmetric pair over $\reels$ $(\alg_0, \sigma_0)$  is a {\em real form} of the symmetric pair over $\complexes$ $(\alg, \sigma)$ if
$\alg = \alg_0 \otimes_\reels \complexes$, $\sigma$ is the complexification of $\sigma_0$.

We say that the extrinsic symmetric quadruple over $\reels$ $(\alg_0, \tialg_0, \sigma_0, \theta_0)$ is a {\em real form} of the (symplectic) extrinsic symmetric quadruple over $\complexes$ $(\alg, \tialg, \sigma, \theta)$ if
$(\alg_0,\sigma_0)$  is a real form of $(\alg,\sigma)$ and
$(\tialg_0, \theta_0)$ is a real form of $(\tialg, \theta)$.

We say that the real form $(\alg_0, \tialg_0, \sigma_0, \theta_0)$
is {\em split} (resp. {\em compact}), if $\alg_0$ is a split (resp. compact) real form of $\alg$ and $\tialg_0$ is a split (resp. compact) real form of $\tialg$.
The symplectic symmetric triple $(\alg_0, \sigma_0, \omega_0)$, (resp. the extrinsic symplectic symmetric quintuple $(\alg_0, \tialg_0, \sigma_0, \theta_0, \omega_0)$) over $\reels$
is said to be a {\em real form} of the symplectic symmetric triple $(\alg, \sigma,\omega)$, (resp. the extrinsic symmetric quintuple $(\alg, \tialg, \sigma, \theta, \omega)$) over $\complexes$, if $(\alg_0, \sigma_0)$ (resp. $(\alg_0, \tialg_0, \sigma_0, \theta_0)$) is a real form of $(\alg, \sigma)$ (resp. $(\alg, \tialg, \sigma, \theta)$) and $\omega$ is the complexification of $\omega_0$.

Note that $(\alg_0, \tialg_0, \sigma_0, \theta_0)$ is a real form of $(\alg, \tialg, \sigma, \theta)$ precisely if
there exists a complex antilinear involution $\tau$ on $\alg$, commuting with $\sigma$ and $\theta$, verifying $\tau(\tialg)=\tialg$, fixing precisely $\alg_0$ in $\alg$ and $\tialg_0$ in $\tialg$, and if we have $\sigma_0=\sigma\,\vrule\,_{\alg_0}$ and $\theta_0=\theta\,\vrule\,_{\alg_0}$.
Note also that for every real form $\alg_0$ of $\alg$ there is a {\em Cartan involution} $\tau$ which is the unique involution such that if $\alg_0=\g k \oplus \g p$ with $\g k$ and $\g p$ being the $+1$- (resp. $-1$-) eigenspaces of $\tau$, then $\g k \oplus i\g p$ is the compact real form.

We say that the real form $(\alg_0, \tialg_0, \sigma_0, \theta_0)$
is {\em outer} (resp. {\em inner}), if the Cartan involution $\tau$ defining the real form is outer (resp. inner).

\subsection{From complex to compact semi-simple extrinsic symmetric quadruples}

\begin{theo}\label{complextorealform}
Let $(\alg, \tialg, \sigma, \theta)$ be a semisimple extrinsic symmetric quadruple over the field $\corps=\complexes$ such that
the rank of $\tialg$ equals the rank of the isotropy subalgebra $\tih$ of $\tialg$.
Then  $(\alg, \tialg, \sigma, \theta)$ admits a split (resp. compact) real form.
\end{theo}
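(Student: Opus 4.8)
The plan is to use Theorem~\ref{csa} to pin down a Chevalley-type structure on $\alg$ that is compatible at once with $\sigma$, with $\theta$ and with $\tialg$, and then to read off the split, respectively the compact, real form from it. First I reduce to the undecomposable case: a real form of a direct product of extrinsic symmetric quadruples is the product of real forms of the factors, and the rank hypothesis passes to the factors. Applying Theorem~\ref{csa} with $\mathbf G:=\langle\sigma,\theta\rangle$ yields an extrinsic Cartan subalgebra $\g h_0\supset\tih_0$, where $\g h_0$ is $\mathbf G$-invariant and $\tih_0$ is a Cartan subalgebra of $\tialg$ lying in the isotropy $\tih\subset\g h_+$, hence fixed pointwise by $\mathbf G$; in particular $\mathbf G$ acts on $\Phi=\Phi(\alg,\g h_0)$.

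Next I choose a Chevalley system $(H_i,e_\alpha)_{\alpha\in\Phi}$ of $\alg$ relative to $\g h_0$ which is \emph{adapted to $\mathbf G$}, i.e. with $\sigma(e_\alpha)=\pm e_{\sigma\cdot\alpha}$ and $\theta(e_\alpha)=\pm e_{\theta\cdot\alpha}$ for all $\alpha$; this is the usual construction — normalise the root vectors so that $\theta$ acts by signs, and then, $\sigma$ commuting with $\theta$ and preserving $\g h_0$, rescale $\mathbf G$-orbit by $\mathbf G$-orbit so that $\sigma$ acts by signs too, keeping the normalisation pairing $e_\alpha$ with $e_{-\alpha}$. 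Then $\alg_0^{\mathrm{spl}}:=\sum_i\reels H_i\oplus\sum_\alpha\reels e_\alpha$ is the split real form of $\alg$, and $\g u:=\sum_i\reels\,iH_i\oplus\sum_{\alpha\in\Phi^+}\bigl(\reels(e_\alpha-e_{-\alpha})\oplus\reels\,i(e_\alpha+e_{-\alpha})\bigr)$ the compact one; since $\mathbf G$ permutes the coroots and permutes the root vectors up to sign, it preserves each of them, so $\sigma,\theta$ restrict to commuting involutions $\sigma_0,\theta_0$. Write $\tau$ for the conjugation of $\alg$ attached to whichever of the two real forms $\alg_0$ I use. (In the compact case one could instead average, $\mathbf G$ being finite; the split case needs the Chevalley data anyway.)

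It remains to match this with $\tialg$; put $\tialg_0:=\tialg\cap\alg_0$. The claim to establish is that the Chevalley system can be taken \emph{simultaneously adapted to $\tialg$}: since $\tih_0\subset\g h_0$ and $\tialg$ is $\ad(\tih_0)$-stable, each root space $\tialg_{\tilde\alpha}$ of $\tialg$ relative to $\tih_0$ sits inside a $\tih_0$-weight space of $\alg$ and can be spanned by a vector of $\alg_0$, while $\tih_0$ itself can be placed in the real span of the pertinent Cartan generators; consistency of these choices under $\tilde\alpha\mapsto-\tilde\alpha$ and under $\theta$ is exactly where the extrinsic identities $[\lambda(x),y]=[\lambda(y),x]$ and the $\lambda$-equivariance of triple brackets enter. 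Granting this, $\tialg$ is spanned over $\complexes$ by vectors of $\alg_0$, so $\tau(\tialg)=\tialg$; $\tialg_0$ contains the split (resp. compact) Cartan $\tih_0\cap\alg_0$ of $\tialg$ and is spanned over $\reels$ by the corresponding real root vectors, hence is a split (resp. compact) real form of $\tialg$; and $(\alg_0,\tialg_0,\sigma_0,\theta_0)$ is an extrinsic symmetric quadruple because, $\tau(\tialg)$ being an extrinsic subalgebra with $\lambda$-map $\tau\circ\lambda\circ\tau$, the uniqueness of $\lambda$ forces $\tau\circ\lambda\circ\tau=\lambda$, so that $\tau$ commutes with $\lambda$, $\tip\cap\alg_0$ is the graph of $\lambda|_{\g p_-\cap\alg_0}$, and all structural identities descend by restriction.

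I expect this matching step to be the main obstacle: $\tialg$ is not an ``obvious'' subalgebra relative to $\g h_0$ — its Cartan $\tih_0$ is properly contained in $\g h_0$ and its root spaces lie diagonally among those of $\alg$ — and it is precisely the hypothesis that $\tialg$ and $\tih$ have equal rank that, through Theorem~\ref{csa}, supplies the common Cartan subalgebra needed to make the compatible choice, and that lets the split and compact cases be treated together by the two standard substitutions in one and the same Chevalley datum. In the symplectic case there is a cleaner route via Theorem~\ref{big_algebra}: there $\alg=\g s\g l(V)$ with $\sigma,\theta$ the block involutions for $V=V_0\oplus V_1\oplus V_2$ and $\tialg$ acting on $V$, so it suffices to take a $\theta$-invariant split (resp. compact) real form $\tialg_0$ of $\tialg$ together with a $\tialg_0$-rational (resp. $\tialg_0$-invariant Hermitian) structure on $V$ for which $V_0,V_1,V_2$ are rational (resp. orthogonal) — the latter automatic since $V_0,V_1,V_2$ are sums of eigenspaces of the centre of $\tih$ for disjoint sets of weights.
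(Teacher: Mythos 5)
Your overall strategy is the same as the paper's: use Theorem~\ref{csa} to get a $\sigma$- and $\theta$-invariant Cartan subalgebra $\g h_0\supset\tih_0$, then build a split (resp.\ compact) real structure on $\alg$ compatible with both involutions, and finally restrict it to $\tialg$. (The paper achieves the compatibility not by orbitwise rescaling of a Chevalley system but by Berger-type corrections $\tau\mapsto e^{\ad(-\frac12 X_0)}\,\tau\, e^{\ad(\frac12 X_0)}$; that difference is cosmetic, though your claim that one can make both commuting involutions act by signed permutations on one Chevalley datum does deserve an orbitwise verification rather than the phrase ``the usual construction''.) The genuine gap is the step you yourself identify as the main obstacle and then skip with ``Granting this'': that the conjugation $\tau$ can be arranged to satisfy $\tau(\tialg)=\tialg$, i.e.\ that the Chevalley datum can be taken ``simultaneously adapted to $\tialg$''. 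This is the heart of the theorem, not a technicality. A root space $\tialg_{\tilde\alpha}$ (relative to $\tih_0$) is a line lying diagonally inside the $\tih_0$-weight space of $\alg$, which is in general a sum of several root spaces $\alg_\alpha$ with $\alpha\,\vrule\,_{\tih_0}=\tilde\alpha$, and $\tip$ is the graph of $\lambda$ inside $\g p_-\oplus\g h_-$; a real structure constructed only from data adapted to $(\g h_0,\sigma,\theta)$ has no a priori reason to preserve such lines. Your one-sentence appeal to $[\lambda(x),y]=[\lambda(y),x]$ and the triple-bracket identity is not an argument: you neither show that each $\tialg_{\tilde\alpha}$ contains a vector with real coordinates in your adapted basis, nor that such vectors can be chosen consistently for $\tilde\alpha$ and $-\tilde\alpha$ and under $\theta$, nor how the equality $\tau\circ\lambda\circ\tau=\lambda$ is to be derived rather than assumed. (The paper's own proof treats this point briskly --- it asserts the $\tau''$-invariance of $\tih_0$ and of the spaces $\tialg_{\tilde\alpha}$ --- but it does state and use it inside the proof, whereas your text leaves it as an unproved hypothesis on which everything else rests.)

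Two further remarks. Your closing suggestion to handle the symplectic case via Theorem~\ref{big_algebra} is circular in the logic of the paper: the complex classification on which Theorem~\ref{big_algebra} rests is obtained (via Nakagawa--Takagi and Naitoh) precisely by means of the present theorem, which transfers the problem to the compact K\"ahler setting; so it cannot be invoked here. Also note where the rank hypothesis actually enters: only through Theorem~\ref{csa}, to guarantee a common invariant Cartan subalgebra with $\tih_0\subset\g h_0$; your remark that it ``lets the split and compact cases be treated together'' is fine, but it does not by itself produce the $\tialg$-compatibility discussed above, which remains the missing step.
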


\begin{proof}
By theorem~\ref{csa} consider a $\sigma$- and $\theta$-invariant csa $\g h_0$ of $\alg$, containing the csa $\tih_0$ of $\tialg$.
There is a natural decomposition $\g h_0=\g h_{0,\reels} \oplus i \g h_{0,\reels}$ as the roots of $\alg$ are contained in a real subspace of $\g h_0^*$.
This decomposition defines an antilinear involution $\tau$ on $\g h_0$.
Note that $\tau$ and $\sigma$ (resp. $\tau$ and $\theta$) commute on $\g h_0$. This follows from the fact that $\sigma$ (resp. $\theta$) maps roots on roots.

To extend $\tau$ to $\alg$ note that the root spaces $\alg_\alpha$ ($\alpha\in\Phi$) are $\tau$-invariant. Consequently it is enough to define $\tau$ on the root spaces $\alg_\alpha$ of a system of simple roots $\Delta\subset\Phi$ such that $\sigma$ and $\tau$ commute, and $\theta$ and $\tau$ commute.
Reciprocally any such choice gives together with the definition of $\tau$ on $\g h_0$ an antilinear involution on $\alg$ for which $\tau(\g h_0)=\g h_0$ and commuting with $\sigma$ and $\theta$.

Start by taking any definition of $\tau$ on $\alg_\alpha$, $\alpha\in\Delta$. As in (Berger \cite{Be} pp. 100-101, Murakami) we can write: $\sigma=\alpha\beta$ with $\tau\alpha=\alpha\tau$ and $\tau\beta=\beta^{-1}\tau$. (We have $\beta=e^{\ad(X)}$, for a unique $X$ such that $\tau(X)=-X$.)

For $X\in\g h_0$, $\alpha\beta\tau(X)=\sigma\tau(X)=\tau\sigma(X)=\alpha\tau\beta(X)=\alpha\beta^{-1}\tau(X)$.
>From this follows $\beta(X)=\beta^{-1}(X)$ for $X\in\g h_0$.
As a consequence $\beta^2\,\vrule\,_{\g h_0}=\Id_{\g h_0}$.
$\beta^2$ stabilizes the root spaces and has to be of the form $\beta^2=e^{\ad(2X_0)}$ with $X_0\in\g h_0$. By uniqueness(Murakami) of the decomposition, we have $\beta=e^{\ad(X_0)}$.
Define $\tau':=e^{\ad(-\frac{1}{2}X_0)} \tau e^{\ad(\frac{1}{2}X_0)}$.
Then (\cite{Be}, Lemma 10.2) $\tau'$ commutes with $\sigma$ and equals $\tau$ on $\g h_0$.

In the next step we are going to look for  $\tau''$ commuting with $\sigma$ and $\theta$ and equal to $\tau$ on $\g h_0$.
Let $\theta=\alpha'\beta'$, with
$\tau'\alpha'=\alpha'\tau'$ and $\tau'\beta'=\beta'^{-1}\tau'$. (We have $\beta'=e^{\ad(X_0')}$, with $\tau'(X_0')=-X_0'$. As before $X_0'\in\g h_0$.)
As in (\cite{Be}, Lemma 10.2) $\alpha'=\beta'^{\frac{1}{2}}\theta\beta'^{-\frac{1}{2}}$.
As a consequence $\alpha'^2=\Id$.
By uniqueness of the decomposition $\sigma\theta=(\sigma \alpha') \beta'$, with respect to $\tau'$, we obtain similarly: $(\sigma \alpha')^2=\Id$, from which it is easy to deduce that $\sigma$ and $\alpha'$ commute. As $\sigma$ and $\theta$ commute, we also deduce that $\sigma$ and $\beta'$ commute. Finally $\sigma$ and $\beta'^{\frac{1}{2}}=e^{\ad({\frac{1}{2}X_0'})}$ commute (see Lemma 10.1 from \cite{Be}).
All in all we obtain that $\tau'':=e^{\ad(-\frac{1}{2}X_0')} \tau e^{\ad(\frac{1}{2}X_0')}$ commutes with $\sigma$ (as $\tau$ and $\beta'^{\frac{1}{2}}$ does). $\tau''$ commutes with $\theta$.

The antiinvolution $\tau''$ defines a split real form of $\alg$. It is easy to see that the automorphism $\tau''$ stabilizes $\tih_0$. The root spaces $\tialg_{\tilde{\alpha}}$ are also $\tau''$-invariant, which implies that $\tau''(\tialg)=\tialg$ and that we obtain a split real form of $(\alg, \tialg, \sigma, \theta)$.

Let us look now for compact real forms.
There is an involution $\iota$ of $\alg$ such that $\iota\,\vrule\,_{\g h_0}=-\Id_{\g h_0}$.
One can choose now a system of non vanishing vectors $X_\alpha\in\alg_\alpha$, $\alpha\in\Phi$, such that $\tau(X_\alpha)=X_\alpha$, $B(X_\alpha,X_{-\alpha})=1$ and $\iota(X_\alpha)=X_{-\alpha}$ (see \cite{HN}).
The vectors $X_\alpha-X_{-\alpha}$, $i(X_\alpha+X_{-\alpha})$ together with $i \g h_{0,\reels}$
define a compact real form of $(\alg, \tialg, \sigma, \theta)$.

\end{proof}

\begin{corr}
Any semisimple extrinsic symmetric quadruple $(\alg, \tialg, \sigma, \theta)$ over the field $\corps=\reels$ such that the rank of $\alg$ and $\tialg$ coincide can be deduced from
the compact real form $(\alg_u, \tialg_u, \sigma, \theta)$ of its complexification by an involution $\tau$ of $\alg_u$ commuting with $\sigma$ and $\theta$ and such that $\tau(\tialg_u)=\tialg_u$ in the following sense:
$\alg_u=\g k \oplus \g p$ and $\alg=\g k \oplus i\g p$ where $\g k$ (resp. $\g p$) is the $+1$- (resp. $-1$-) eigenspace of $\tau$ and
$\tialg_u=\tilde{\g k} \oplus \tilde{\g p}$ and $\tialg=\tilde{\g k} \oplus i\tilde{\g p}$ where $\tilde{\g k}$ (resp. $\tilde{\g p}$) is the $+1$- (resp. $-1$-) eigenspace of $\tau\,\vrule\,_{\tialg_u}$.\footnote{In this setting the condition $\tau(\tialg_u)=\tialg_u$ can be replaced by the condition that $\lambda$ and $\tau$ commute.}
\end{corr}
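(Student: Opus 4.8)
The plan is to reduce the assertion to the classical correspondence between an arbitrary real form of a complex semisimple Lie algebra and its compact real form, keeping track of the two commuting involutions $\sigma,\theta$ and of the subalgebra $\tialg$ along the way.

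First I would complexify. Set $\alg_\complexes:=\alg\otimes_\reels\complexes$, extend $\sigma$ and $\theta$ complex-linearly, and put $\tialg_\complexes:=\tialg\otimes_\reels\complexes\subset\alg_\complexes$. The bracket relations defining an extrinsic symmetric quadruple are $\corps$-linear, hence pass to the complexification, and ranks are unchanged; so $(\alg_\complexes,\tialg_\complexes,\sigma,\theta)$ is a complex semisimple extrinsic symmetric quadruple to which Theorem~\ref{complextorealform} applies. Let $\tau_0$ be the conjugation of $\alg_\complexes$ relative to $\alg$: it is a complex-antilinear involution, it fixes exactly $\alg$, it satisfies $\tau_0(\tialg_\complexes)=\tialg_\complexes$ (fixing exactly $\tialg$ there), and it commutes with $\sigma$ and $\theta$ because these are complex-linear extensions of maps already defined over $\reels$. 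By Theorem~\ref{complextorealform} there is a compact real form of $(\alg_\complexes,\tialg_\complexes,\sigma,\theta)$, that is, a complex-antilinear involution $\tau_u$ of $\alg_\complexes$ that commutes with $\sigma$ and $\theta$, satisfies $\tau_u(\tialg_\complexes)=\tialg_\complexes$, and whose fixed algebra $\alg_u$ is compact; set $\tialg_u:=\alg_u\cap\tialg_\complexes$, a compact real form of $\tialg_\complexes$.

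Next I would adjust $\tau_u$ so that it commutes with $\tau_0$, via the classical argument of Cartan (see \cite{Be}, \cite{Knapp}, or the proof of Theorem~\ref{complextorealform} above): the complex-linear automorphism $N:=(\tau_0\tau_u)^2$ is positive-definite and self-adjoint for the inner product associated with the compact form $\alg_u$, so its real powers $N^t$ form a one-parameter group of automorphisms, and $\tau_u':=N^{1/4}\tau_u N^{-1/4}$ is again a compact conjugation, now commuting with $\tau_0$. The essential observation is that $N$ is built solely out of $\tau_0$ and $\tau_u$, each of which commutes with $\sigma$ and $\theta$ and preserves $\tialg_\complexes$; hence $N$, and then $\log N$ and every $N^t$, commute with $\sigma$ and $\theta$ and stabilize $\tialg_\complexes$, and the same therefore holds for $\tau_u'$. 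Replacing $\alg_u$ by the fixed algebra of $\tau_u'$ and $\tialg_u$ by its intersection with $\tialg_\complexes$, we may thus assume $\tau_0\tau_u=\tau_u\tau_0$. This compatibility step is the point I expect to be the main obstacle: one must run Cartan's conjugation while verifying that $N^{1/4}$ destroys none of the three extra structures $\sigma$, $\theta$, $\tialg_\complexes$; the verification for $\sigma$ and $\theta$ is exactly the bookkeeping already carried out inside the proof of Theorem~\ref{complextorealform}, and the only genuinely new point is that $N^{1/4}$ also stabilizes $\tialg_\complexes$, which holds because $N$ does.

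It then remains to read off the eigenspaces. Since $\tau_0$ and $\tau_u$ commute, $\tau:=\tau_0\tau_u$ is a complex-linear involution of $\alg_\complexes$ commuting with $\sigma$ and $\theta$, preserving $\alg_u$ (because it commutes with $\tau_u$) and $\tialg_u$ (because it preserves both $\tialg_\complexes$ and $\alg_u$); denote its restriction to $\alg_u$ again by $\tau$ and write $\alg_u=\g k\oplus\g p$ for its $+1$- and $-1$-eigenspaces. Computing the action of $\tau_0=\tau\tau_u$ on $\alg_\complexes=\g k\oplus i\g k\oplus\g p\oplus i\g p$ gives $\tau_0=+\Id$ on $\g k\oplus i\g p$ and $\tau_0=-\Id$ on $i\g k\oplus\g p$; since $\alg$ is the fixed algebra of $\tau_0$, this reads $\alg=\g k\oplus i\g p$ and $\alg_u=\g k\oplus\g p$. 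The same computation inside $\tialg_\complexes$, with $\tilde{\g k}:=\tialg_u\cap\g k$ and $\tilde{\g p}:=\tialg_u\cap\g p$ the eigenspaces of $\tau|_{\tialg_u}$, gives $\tialg=\tilde{\g k}\oplus i\tilde{\g p}$ and $\tialg_u=\tilde{\g k}\oplus\tilde{\g p}$. Finally, concerning the footnote: since $\tau$ commutes with $\sigma$ and $\theta$ it preserves $\g p_-$ and $\g h_-$, so by uniqueness of $\lambda$ the condition $\tau(\tialg_u)=\tialg_u$ is equivalent to $\tau(X+\lambda(X))=\tau X+\lambda(\tau X)$ for all $X\in\g p_-$, i.e.\ to $\tau\lambda=\lambda\tau$.
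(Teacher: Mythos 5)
Your proof is correct and is essentially the argument the paper intends (the corollary is stated there without an explicit proof, as a consequence of the complex-to-compact theorem together with the Cartan involution of a real form): complexify, take the compact conjugation compatible with $\sigma$, $\theta$ and $\tialg$ furnished by that theorem, align it with the conjugation defining $\alg$ via the Cartan--Berger trick while checking that $N^{1/4}$, being a polynomial in $N$, still commutes with $\sigma,\theta$ and stabilizes $\tialg\otimes_\reels\complexes$, and then read off the eigenspace decompositions; your treatment of the footnote via uniqueness of $\lambda$ is also fine. The only point worth flagging is the rank hypothesis: the theorem you invoke requires the rank of $\tialg$ to equal that of its isotropy algebra $\tih$, so the corollary's condition must be read as guaranteeing exactly that (a wording issue of the paper rather than a gap in your argument).
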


Note that the involution $\tau$ extends to a complex-linear involution on the complexification of $\alg$. This is what we mean in the following by an involution $\tau$ defining a real form.

\subsection{The symplectic case}

Note that any involution of the outer algebra $\g s \g l(V)$ is either conjugated to the inner involution $\Ad(I_{V_+,V_-})$, where $V=V_+\oplus V_-$, or conjugated to the outer involution $X\mapsto -^{t}X$.

\subsubsection{Definition}
We now turn to the characterization of extrinsic symplectic symmetric spaces as in theorem \ref{big_algebra}.
Let $(\g s, \sigma, \omega)$ be a complex simple symplectic symmetric triple with defining quadruple $(\g h_0, \Delta, \alpha_0, r)$ and $r=1$.

Let $(\g s_0, \sigma_0, \omega_0)$ a real form of $(\g s, \sigma, \omega)$ and let $\tau$ be an involution of $\g s$ commuting with $\sigma$ and
corresponding to the real form $\g s_0$. Furthermore we suppose that $\g h_0$ is stabilized by $\tau$.
Let $h_{\alpha_0}$ be defined as usually. We have to distinguish two cases as $\tau(h_{\alpha_0})=\pm h_{\alpha_0}$.

We say that $(\g s_0, \sigma_0, \omega_0)$ is an {\em extrinsic real form of compact type} (resp. {\em non-compact}) {\em type} of $(\g s, \sigma, \omega)$ if $\tau$ fixes a non vanishing root vector $X_{\alpha_0}$ (resp. if $\tau(X_{\alpha_0})$ is of weight $-\alpha_0$).\footnote{We ignore for the moment the case $\tau(X_{\alpha_0})=-X_{\alpha_0}$.} Note that up to isomorphism, the choice of the root ${\alpha_0}$
does not change the extrinsic real form.
One can "flip" between extrinsic real forms of compact and non-compact type by considering an involution $\tau_0$ that restricts to $-\Id$ on $\g h_0$ and commutes with $\tau$. The bijective correspondence mapping $\tau$ to $\tau\tau_0$ does not give though a bijective correspondence between real forms.

\subsubsection{The correspondence between the two descriptions}

\begin{prop}
There is a bijective correspondence between the extrinsic real forms $\tau_1$ of compact (resp. non-compact) type of complex simple symplectic symmetric triples $(\g s, \sigma, \omega_0)$ (with $\omega_0$ canonical) and the inner (resp. outer) real forms $\tau_2$ of its corresponding extrinsic symplectic symmetric quintuples $(\g s \g l(V),\tilde{\g g},\sigma,\theta, \omega_1)$ (with $\omega_1$ canonical)
such that the restrictions of $\tau_1$ and $\tau_2$ to $\tilde{\g g}$ (respectively $V$) coincide.
\end{prop}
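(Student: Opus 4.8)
The plan is to exploit that the semisimple algebra $\tilde{\g g}$ and the module $V$ are literally the same objects in the two descriptions, and that each of $\tau_1$, $\tau_2$ is a \emph{rigid} extension of its restriction to $\tilde{\g g}$ and $V$; the correspondence is then nothing but matching these restrictions. First I would normalise: by Theorem~\ref{complextorealform}, together with conjugacy of Cartan subalgebras and of Cartan involutions into standard position, we may assume the involution $\tau_1$ defining the real form commutes with $\sigma$ and stabilises the symplectic extrinsic Cartan subalgebra $\g h_0$; hence it preserves $\g h=\complexes Z\oplus\tilde{\g g}$, its semisimple part $\tilde{\g g}$, and the decomposition $\g p=\g p_+\oplus\g p_-$ into the sums of root spaces of $\g s$ with $n_{\alpha_0}=+1$, resp. $-1$. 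By Theorem~\ref{big_algebra}, $\g p_+$ and $\g p_-$ are irreducible $\tilde{\g g}$-modules dual to one another, each generated over $\tilde{\g g}$ by its extreme weight line $\g s_{\pm\alpha_0}$, and $\g p_+\oplus\g p_-\cong V\oplus V^*$ as $\tilde{\g g}$-modules, with $V$ carrying the three-step grading $V=V_0\oplus V_1\oplus V_2$ of the statement.

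\emph{From $\tau_1$ to $\tau_2$.} Since $\g p_+=U(\tilde{\g g})\cdot\g s_{\alpha_0}$, the action of $\tau_1$ on $\g p_+$ is determined by its action on $\g s_{\alpha_0}$. In the compact case $\tau_1$ fixes $\g s_{\alpha_0}$, hence $\tau_1(\g p_+)=\g p_+$ and $\tau_1|_{\g p_+}$ is a complex-linear involution of $V$, i.e. a splitting $V=V^+\oplus V^-$ refining the $V_j$-grading and compatible with the $\tilde{\g g}$-action twisted by $\tau_1|_{\tilde{\g g}}$; set $\tau_2:=\Ad(I_{V^+,V^-})$, an inner involution of $\g s\g l(V)$. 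In the non-compact case $\tau_1(\g s_{\alpha_0})=\g s_{-\alpha_0}\subset\g p_-$, so $\tau_1$ induces a complex-linear isomorphism $\Phi\colon V\to V^*$, hence a non-degenerate (symmetric or antisymmetric) bilinear form on $V$; set $\tau_2:=(A\mapsto-\Phi^{-1}A^{t}\Phi)$, the corresponding negative transpose, an outer involution of $\g s\g l(V)$. The equivariance identity $\rho(\tau_1 Y)=\tau_1|_{\g p_+}\,\rho(Y)\,(\tau_1|_{\g p_+})^{-1}$ for $Y\in\tilde{\g g}$, valid because $\g p_+$ is an $\ad(\tilde{\g g})$-submodule of $\g s$, then shows that $\tau_2$ restricts to $\tau_1$ on $\tilde{\g g}$ and on $V$, that $\tau_2(\tilde{\g g})=\tilde{\g g}$, and — using that $\tau_1$ preserves the $V_j$-grading — that $\tau_2$ commutes with $\sigma$ and $\theta$; invariance of $\omega_1$ follows from $\tau_1(Z)=\pm Z$, invariance of the Killing form, and the normalisation $r=r_i=1$.

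\emph{From $\tau_2$ to $\tau_1$, and inverseness.} Conversely, an inner (resp. outer) $\tau_2$ in standard position restricts to an involution of $\tilde{\g g}$ defining a real form, and induces on $V$ a complex-linear involution (resp. an identification $V\cong V^*$) compatible with the grading and with $\tau_2|_{\tilde{\g g}}$; transporting these along $\g p_+\cong V$, $\g p_-\cong V^*$ and declaring $\tau_1|_{\complexes Z}=\Id$ (resp. $-\Id$, as will be forced) defines a linear map $\tau_1$ on $\g s=\complexes Z\oplus\tilde{\g g}\oplus\g p_+\oplus\g p_-$. That $\tau_1$ is an involutive Lie-algebra automorphism of $\g s$ reduces to its compatibility with the three bracket maps $\g h\otimes\g p_\pm\to\g p_\pm$ and $\g p_+\otimes\g p_-\to\g h$: the first two are the $\tilde{\g g}$-equivariance of the module structures on $V$ and $V^*$, hence automatic; the last is a $\tilde{\g g}$-invariant pairing $V\otimes V^*\to\complexes Z\oplus\tilde{\g g}$, unique up to scalars by Schur's lemma and normalised by "$\omega_0$ canonical", so its $\tau_1$-equivariance is forced by that of $\tau_2$ on $\g s\g l(V)$. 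One then checks that $\tau_1$ commutes with $\sigma$, preserves $\omega_0$, and is of compact (resp. non-compact) type. The two assignments are mutually inverse because each produces the unique extension of the common data $(\tau|_{\tilde{\g g}},\tau|_{V})$ to its own algebra, and agreement of these restrictions is exactly the condition in the statement; the passage to isomorphism classes, and the reduction to standard position on both sides, are routine given Theorems~\ref{csa} and~\ref{complextorealform}.

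The step I expect to be the main obstacle is the reverse direction: proving that a real form of $\tilde{\g g}$ together with a compatible complex-linear structure on $V$ genuinely reassembles into an involutive automorphism of the larger \emph{simple} algebra $\g s$ — that is, controlling the bracket $\g p_+\otimes\g p_-\to\g h$ and the forced sign of $\tau_1$ on $Z$ — while keeping the normalisations of $Z$ and of the canonical symplectic forms $\omega_0,\omega_1$ consistent. This is also the point where one must check carefully that "$\tau_1$ fixes, resp. reverses, the line $\g s_{\alpha_0}$" corresponds exactly to "$\tau_2$ inner, resp. outer", and where the excluded case $\tau_1(X_{\alpha_0})=-X_{\alpha_0}$ from the definitions would have to be dealt with.
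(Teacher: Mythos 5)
Your construction coincides with the paper's in the compact direction: both rest on the fact that every weight vector of $V\cong\g p_+$ has the form $\ad(X_1)\cdots\ad(X_k)X_{\alpha_0}$ with $X_i\in\tilde{\g g}$, so that a $\tau_1$ fixing $X_{\alpha_0}$ propagates to an involution of $V$ and yields $\tau_2=\Ad(I_{V^+,V^-})$, and conversely. You genuinely diverge in two places. For the non-compact case the paper never constructs the outer involution directly: it multiplies $\tau_1$ by an involution $\tau_0$ acting as $-\Id$ on $\g h_0$ to reduce to the compact case, gets an inner form $\Ad(I_{V_+,V_-})$, and then multiplies back by an analogous $\tau_0'$ on $\g s\g l(V)$ commuting with $\sigma$, $\theta$ and $\Ad(I_{V_+,V_-})$; you instead build $\tau_2$ as $A\mapsto-\Phi^{-1}A^t\Phi$ from the identification $\Phi\colon V\to V^*$ coming from $\tau_1$ swapping $\g p_\pm$. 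For the converse the paper again propagates $\tau_2|_V$ along $\ad$-words from the (sign-normalized) highest weight vector, while you reassemble $\tau_1$ abstractly and control the bracket $\g p_+\otimes\g p_-\to\g h$ by a uniqueness argument. Your route is more intrinsic and spares you the auxiliary flips $\tau_0,\tau_0'$ (whose existence and commutation properties the paper only asserts); the paper's route spares it your two delicate verifications.

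Those two verifications are where your sketch is thinner than it reads. First, ``unique up to scalars by Schur's lemma'' is not literally Schur: the space of $\tilde{\g g}$-equivariant maps $V\otimes V^*\to\tilde{\g g}$ has dimension equal to the multiplicity of the adjoint module in $V\otimes V^*$, i.e.\ to the number of simple roots (per simple factor) not orthogonal to the highest weight; this is $1$ for every module in the list, but it must be observed, and even then the remaining scalars on the $\complexes Z$-part and on the $\tilde{\g g}$-part have to be pinned to $1$ (and the sign of $\tau_1$ on $Z$ forced) by evaluating, say, on $X_{\alpha_0}\otimes\tau_1(X_{\alpha_0})$ and using involutivity. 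Second, in the non-compact case ``$\tau_1$ preserves the $V_j$-grading'' is not available as stated, since $\tau_1$ maps $\g p_+$ to $\g p_-$; what you need is that $\tau_1$ sends $V_j$ to the piece of $\g p_-$ dual to $V_j$, equivalently that the induced map on roots negates the grading. This is true, but it uses a small root-theoretic fact: because $n_{\alpha_0}\in\{0,\pm1\}$ forces $\alpha_0$ to be long, one has $j=2-\langle\alpha,\alpha_0^\vee\rangle$ on the roots of $\g p_+$, and $\alpha_0\mapsto-\alpha_0$ negates this pairing; without it the commutation of your $\tau_2$ with $\theta$ (and $\sigma$) is unproved (the compact case needs the same fact, or the observation that $\tau_1$ preserves $\tilde{\g h}$ and hence its center). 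With these two points supplied --- or replaced by the paper's propagation and flip arguments --- your plan goes through; the case $\tau_1(X_{\alpha_0})=-X_{\alpha_0}$ that worries you is also set aside by the paper itself in a footnote, so you are not missing anything it handles.
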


\begin{proof}
We show the correspondence first in the compact case.
Let $\tau$ be the involution corresponding to a compact real form of a complex simple symplectic symmetric triple $(\g s, \sigma, \omega_0)$ and
let be a corresponding defining quadruple.
We have $\g s=(\tialg \oplus \complexes) \oplus (V \oplus V^*)$ with $\tau(\tialg)=\tialg$, $\tialg=\tilde{\g p}\oplus \tilde{\g h}$ with $\tilde{\g h}$ being the centralizer of $X_{\alpha_0}$, which is $\tau$-invariant.

We have $\tau(X_{\alpha_0})=X_{\alpha_0}$. For each weight vector $X_\alpha$ for $V$ we have
$X_\alpha=\ad(X_0) \ldots \ad(X_k)X_{\alpha_0}$ for certain $X_0, \ldots, X_k\in\tilde{\g p}$, and so $\tau(X_\alpha)=\ad(\tau(X_0)) \ldots \ad(\tau(X_k))X_{\alpha_0}$.
It is clear then that the matrix $I_{V_+,V_-}$, with $V_\pm=\langle X_\alpha\in V \,|\, \tau(X_\alpha)=\pm X_\alpha \rangle$, is such that the inner involution $\Ad(I_{V_+,V_-})$ defines a real form of $(\g s \g l(V),\tilde{\g g},\sigma,\theta, \omega_1)$.

Reciprocally suppose given an inner real form of $(\g s \g l(V),\tilde{\g g},\sigma,\theta, \omega_1)$ with defining
involution $\Ad(I_{V_+,V_-})$. By restriction to $\tialg$ we obtain a real form of $\tialg$. The involution $\sigma$ fixes the highest weight vector $X_\omega$ of $V$, as a consequence $\tau (X_\omega)=\tau \sigma (X_\omega)=\sigma \tau (X_\omega)$ which means that
$\tau (X_\omega)=\pm X_\omega$. By eventually exchanging $V_+$ and $V_-$ we can suppose $X_\omega\in V_+$.
The highest weight vector is identical to the root vector $X_{\alpha_0}$ in the second way of describing extrinsic spaces.
Again we have for every root vector $X_\alpha\in V$ (resp. we can define for $V_*$):
$X_\alpha=\ad(X_0) \ldots \ad(X_k)X_{\pm\alpha_0}$ for certain $X_0, \ldots, X_k\in\tilde{\g g}$, and so $\tau(X_\alpha)=\ad(\tau(X_0)) \ldots \ad(\tau(X_k))X_{\pm\alpha_0}$, which shows that $\tau$ can be seen as an extrinsic real form
of compact type of the complex simple symplectic symmetric triple $(\g s, \sigma, \omega_0)$.

Suppose now that we are in the non compact case.
Let $\tau$ be the involution corresponding to a real form of non-compact type of a complex simple symplectic symmetric triple $(\g s, \sigma, \omega_0)$ and consider a corresponding defining quadruple. There is an involution $\tau_0$ that commutes with $\tau$ and $\sigma$
and restricts to $-\Id$ on the csa $\g h_0$.
If is clear then that $\tau':=\tau\tau_0$ is a real form of compact type of $(\g s, \sigma, \omega_0)$
which corresponds to an inner real form $\Ad(I_{V_+,V_-})$ of $\g s \g l(V)$.
Finally $X\mapsto \tau_0'\circ\Ad(I_{V_+,V_-})(X)$ defines an outer real form of $(\g s \g l(V),\tilde{\g g},\sigma,\theta, \omega_1)$
corresponding to $\tau$, where $\tau_0'$ is an involution of $\g s \g l(V)$ restricting to $-\Id$ on the csa $\g h_0$ and commuting with $\sigma$, $\Ad(I_{V_+,V_-})$ and $\theta$.

Finally suppose that we have an outer real form of $(\g s \g l(V),\tilde{\g g},\sigma,\theta, \omega_1)$
corresponding to $\tau$. If $\tau_0$ is a mapping that commutes with $\tau$, $\sigma$ and $\theta$
and restricts to $-\Id$ on the csa $\g h_0$, then $\tau':=\tau\tau_0$ is an inner real form of $(\g s \g l(V),\tilde{\g g},\sigma,\theta, \omega_1)$ and corresponds to a real form $\tau''$ of compact type of $(\g s, \sigma, \omega_0)$. There is an involution $\tau_0'$ of $\g s$ restricting to $-\Id$ on the csa $\g h_0$ and commuting with $\sigma$ and $\tau''$. The involution $\tau_2:=\tau''\tau_0'$ obviously answers the requirements.

\end{proof}

\pagebreak

\section{List of semisimple symplectic extrinsic symmetric spaces}\label{list}

\subsection{The complex classification}\label{complex_classif}
From the list of H. Nakagawa and R. Takagi we obtain the following classification of
the full\footnote{We only consider extrinsic subspaces not contained in a proper totally geodesic subspace $\g s \g l(N_0+1,\complexes)/\g g\g l (N_0,\complexes)$ of $M$.}
undecomposable non totally geodesic semisimple complex symplectic extrinsic symmetric quintuples $(\alg,\tialg,\sigma,\theta,\omega)$. If $V$ denotes the first fundamental representation of $\rho: \alg \to \g g \g l(V)$, we consider the restriction of $\rho$ to $\tialg$, which we also denote by $V$ and which we will list in the following table.
The outer symmetric pair is $\g s \g l(N+1,\complexes)/\g g\g l (N,\complexes)$.
\[\begin{array}{|l|l|l|l|}\hline
\tialg / \tih & V & N & \mbox{conditions}\\\hline
\g s \g l(n+1,\complexes)/\g g \g l(n,\complexes) & S^2(\complexes^{n+1}) & \frac{1}{2}(n^2+3n) & n \ge 1\\
\g s \g l(n+1,\complexes)/\g s \g l(2,\complexes)\oplus\g s \g l(n-1,\complexes)\oplus\complexes & \bigwedge^2 (\complexes^{n+1}) & \frac{1}{2}(n+1)n-1 & n \ge 4\\
\g s \g o(n+1,\complexes)/\g s \g o(2,\complexes)\oplus \g s \g o(n-1,\complexes) & \complexes^{n+1} & n & n \ge 4\\
\g s \g o(10,\complexes)/\g g \g l(5,\complexes) & \mbox{halfspin rep.} & 15 & \\
\g e_6^\complexes/\g s \g o(10,\complexes)\oplus \complexes & \complexes^{27} & 26 &\\
\g s \g l(a+1,\complexes)/\g g \g l(a,\complexes) \times \g s \g l(b+1,\complexes)/\g g \g l(b,\complexes)& \complexes^{a+1}\otimes  \complexes^{b+1} & ab+a+b & 1 \le a \le b\\
\hline
\end{array}\]

\subsection{Real forms}\label{realforms}
We first prove the following result:

\begin{prop}
Let $(\g s \g l(V),\tialg,\sigma,\theta)$ be an example of the list in section \ref{complex_classif} and $\phi$ be an involution commuting with $\sigma$ and $\theta$. If $\phi$ fixes $\tialg$ then $\phi$ is the identity on $\g s \g l(V)$.
\end{prop}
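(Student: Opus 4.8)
The plan is to show that $\phi$ must act trivially on both $\tialg$ and $V$, and then conclude it is the identity on $\g s\g l(V)$. First I would recall from Theorem~\ref{big_algebra} that each example carries the structure $\g s=(\tialg\oplus\complexes)\oplus(V\oplus V^*)$, that $\tialg$ (identified with its representing subalgebra in $\g s\g l(V)$) acts on $V$ with highest weight $\tilde\alpha_0=\alpha_0|_{\tih_0}$, and that the highest weight vector $X_\omega\in V$ coincides with the root vector $X_{\alpha_0}$, which is fixed (up to sign) by any involution commuting with $\sigma$ (by the argument used in the previous proposition: $\sigma$ fixes $X_\omega$, so $\phi(X_\omega)=\phi\sigma(X_\omega)=\sigma\phi(X_\omega)$, forcing $\phi(X_\omega)=\pm X_\omega$, and replacing $\phi$ by the conjugate under $\Ad(I_{V_+,V_-})$ if necessary we may take $\phi(X_\omega)=X_\omega$).

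Next I would use the hypothesis that $\phi$ is the identity on $\tialg$. Since every weight vector $X_\alpha\in V$ is obtained as $\ad(X_0)\cdots\ad(X_k)X_{\alpha_0}$ for suitable $X_0,\dots,X_k\in\tialg$ (as $V$ is an irreducible $\tialg$-module generated by its highest weight vector under lowering operators from $\tialg$), and since $\phi$ commutes with $\ad$ and fixes each $X_i\in\tialg$ and fixes $X_{\alpha_0}$, we get $\phi(X_\alpha)=X_\alpha$ for all weight vectors spanning $V$; hence $\phi|_V=\Id_V$. The same argument applied to $V^*$ (or using that $\phi$ preserves $V^*=\langle X_\alpha\rangle$ and is determined there by the contragredient action of $\tialg$) gives $\phi|_{V^*}=\Id_{V^*}$. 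Thus $\phi$ is the identity on $V\oplus V^*$, and since $\g s=(\tialg\oplus\complexes)\oplus(V\oplus V^*)$ with $\phi$ already trivial on $\tialg$, it is trivial on $\complexes Z$ as well (the center of $\g h$ is spanned by the image of $h_{\alpha_0}$, which acts by scalars on $V$ and is determined by how it acts there).

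Finally, to descend to $\g s\g l(V)$: the point is that $\phi$, a priori an automorphism of the whole symmetric triple, restricts to an automorphism of $\g s\g l(V)$ that is the identity on the subalgebra $\tialg$ and on the module $V$. An automorphism of $\g s\g l(V)$ fixing $\tialg$ pointwise and acting trivially on $V$ must be inner, given by $\Ad(g)$ for some $g\in GL(V)$ centralizing $\tialg$ and scalar on $V$; but $V$ is an irreducible $\tialg$-module, so by Schur's lemma $g$ is a scalar, hence $\Ad(g)=\Id$. The main obstacle I anticipate is making precise the passage from "$V$ is an irreducible $\tialg$-module" to "$\phi|_V=\Id$ forces $\phi=\Id$ on $\g s\g l(V)$": one must be careful that the identification of $\tialg$ as a subalgebra of $\g s\g l(V)$ is exactly the one appearing in the list, and that $V$ is genuinely irreducible as a $\tialg$-module in every case of the table (this irreducibility is precisely the "(why?)" flagged in Section~\ref{resultscomplex}, and is the content of the Nakagawa--Takagi analysis), so that Schur's lemma applies uniformly.
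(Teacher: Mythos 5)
There is a genuine gap here: your argument never addresses outer involutions of $\g s\g l(V)$, which is precisely the nontrivial half of the statement. An abstract involution $\phi$ of $\g s\g l(V)$ does not act on $V$ at all unless it is inner; your steps ``$\phi(X_\omega)=\pm X_\omega$'', ``$\phi(X_\alpha)=\ad(\phi(X_0))\cdots\ad(\phi(X_k))X_{\alpha_0}$'' and ``$\phi|_V=\Id_V$'' all take place inside the decomposition $\g s=(\tialg\oplus\complexes)\oplus(V\oplus V^*)$, but that decomposition lives in the simple symplectic triple $\g s$ of Theorem~\ref{big_algebra}, not in $\g s\g l(V)$, and the $\phi$ of the proposition is given only on $\g s\g l(V)$; there is no a priori extension of $\phi$ to $\g s$ (constructing such an extension is essentially what the preceding correspondence proposition is about, and it too must distinguish inner from outer). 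As a result, your closing claim that ``an automorphism of $\g s\g l(V)$ fixing $\tialg$ pointwise and acting trivially on $V$ must be inner'' is circular: whether $\phi$ is inner is exactly the dichotomy to be settled. In the inner case $\phi=\Ad(g)$, the hypothesis says $g$ centralizes the irreducible $\tialg$-module $V$, and Schur's lemma forces $g$ scalar, $\phi=\Id$ --- this part of your proof coincides with the paper's. (Your worry about irreducibility of $V$ as a $\tialg$-module is legitimate and is also what the paper relies on.)

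What is missing is the case where $\phi$ is conjugate to a transposition $X\mapsto -{}^tX$. If such a $\phi$ fixes $\tialg$ pointwise, then every element of $\tialg$ is skew with respect to the corresponding bilinear form, i.e.\ $\tialg\subset\g s\g o(V)$, so $V$ would be a self-dual orthogonal $\tialg$-module; this cannot be excluded by general nonsense (for a generic irreducible orthogonal module such an outer involution fixing the subalgebra pointwise does exist), so some inspection of the specific examples is unavoidable. The paper does exactly this: it checks case by case from the list of Section~\ref{complex_classif} (using the explicit descriptions in Section~\ref{realforms}) that a generator of the center of the isotropy algebra $\tih$ is not fixed by the transposition, yielding a contradiction. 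Also note that your device of replacing $\phi$ by its conjugate under $\Ad(I_{V_+,V_-})$ to normalize $\phi(X_\omega)=X_\omega$ is not permitted here, since conjugation need not preserve the hypothesis that $\phi$ fixes $\tialg$ pointwise. Without the outer case your proof establishes only half of the proposition.
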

\begin{proof}
The proof is based on the observation that every involution of $\g s \g l(V)$ is conjugated either to an $\Ad(I_{p,q})$ commuting with $\sigma$ or to a transposition commuting with $\sigma$.
As the representation defining $\tialg$ is irreducible, $\Ad(I_{p,q})$ fixes $\tialg$ iff $\Ad(I_{p,q})$ is the identity.
If $\tialg$ is fixed by a transposition, we have $\tialg\subset\g s\g o(V)$. By examining the corresponding cases from the list, see section \ref{realforms}, we see that as least a generator of the center of the isotropy algebra of $\tialg$ is not fixed by the transposition.
\end{proof}

\begin{corr}
Any real form of $(\g s \g l(V),\tialg,\sigma,\theta)$, where  $(\g s \g l(V),\tialg,\sigma,\theta)$ is in the list of section \ref{complex_classif}, is determined by its restriction to $(\tialg,\theta)$.
\end{corr}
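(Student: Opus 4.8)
The plan is to deduce the corollary directly from the preceding proposition together with the structure provided by Theorem~\ref{complextorealform} and its corollary. Recall that a real form of $(\g s \g l(V),\tialg,\sigma,\theta)$ is given by an antilinear involution $\tau$ on $\g s \g l(V)$ (equivalently, after the corollary to Theorem~\ref{complextorealform}, by a linear involution $\tau$ of the compact real form commuting with $\sigma$ and $\theta$ and preserving $\tialg$). So suppose $\tau_1$ and $\tau_2$ are two such involutions with $\tau_1\,\vrule\,_{\tialg}=\tau_2\,\vrule\,_{\tialg}$ (as maps $\tialg\to\tialg$, after identifying the corresponding real forms of $(\tialg,\theta)$); we want $\tau_1=\tau_2$ on all of $\g s \g l(V)$.

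First I would form the composite $\phi:=\tau_2^{-1}\circ\tau_1=\tau_2\circ\tau_1$. Since $\tau_1$ and $\tau_2$ are both antilinear involutions commuting with $\sigma$ and $\theta$, the composite $\phi$ is a complex-\emph{linear} automorphism of $\g s \g l(V)$ commuting with $\sigma$ and $\theta$. Moreover, because $\tau_1$ and $\tau_2$ agree on $\tialg$, the map $\phi$ restricts to the identity on $\tialg$; in particular $\phi(\tialg)=\tialg$ and $\phi$ fixes $\tialg$ pointwise. The only point needing a little care is that $\phi$ is an \emph{involution}: here one uses that $\phi$ fixes $\tialg$ pointwise and that $\tialg$ acts irreducibly on $V$ (this is the irreducibility invoked in section~\ref{resultscomplex} and recorded in Theorem~\ref{big_algebra}); by Schur's lemma $\phi$ acts as a scalar on $V$, hence — being an automorphism of $\g s \g l(V)=\g s \g l(V\oplus V^*)$ preserving the summands — $\phi$ is either $\Ad(c\,\Id_V)=\Id$ or, in the presence of an outer component, of order dividing $2$; in every case $\phi^2=\Id$. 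Thus $\phi$ meets the hypotheses of the preceding proposition.

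Applying the preceding proposition to $\phi$ yields $\phi=\Id_{\g s \g l(V)}$, i.e. $\tau_1=\tau_2$. This shows the assignment ``real form $\mapsto$ its restriction to $(\tialg,\theta)$'' is injective, which is exactly the assertion. I expect the only real obstacle to be the bookkeeping of linearity versus antilinearity and the verification that the composite is genuinely an involution rather than merely an automorphism fixing $\tialg$; once $\phi$ is seen to be an involution fixing $\tialg$, the proposition does all the work. (One should also note that the statement is about which real forms \emph{occur}: by the correspondence in the proposition of section~\ref{realforms} every real form of the quintuple that restricts to a given real form of $(\tialg,\theta)$ is already accounted for, so injectivity is the whole content.)
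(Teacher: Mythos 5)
Your overall strategy is genuinely different from the paper's. The paper does \emph{not} claim that the composite of the two involutions is again an involution: it observes only that $\tau\phi^{-1}$ is an involution when $\tau$ and $\phi$ commute, and then spends the rest of the proof on a Berger-type reduction, writing $\phi=\alpha\beta$ with $\alpha$ an involution commuting with $\tau$ and $\beta=e^{\ad X}$, showing that $\beta$ (hence $\alpha$) fixes $\tialg$ pointwise, and conjugating by $\psi=\beta^{\frac{1}{2}}$ to land in the commuting case, after which the preceding proposition applies. You instead try to prove directly that $\phi:=\tau_2\tau_1$ is an involution, using irreducibility of $V$ as a $\tialg$-module. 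That shortcut can be made to work, but as written the key step has a gap: Schur's lemma gives what you want only when $\phi$ is inner. If $\phi=\Ad(g)$ fixes $\tialg$ pointwise, then $g$ centralizes the irreducible image of $\tialg$ in $\g g\g l(V)$, so $g$ is scalar and $\phi=\Id$ (and in this case you do not even need the proposition). If $\phi$ is outer, i.e. $\phi=\Ad(g)\circ(X\mapsto -\,^tX)$, it is not induced by an endomorphism of $V$ at all, so ``$\phi$ acts as a scalar on $V$'' has no meaning, and the assertion ``of order dividing $2$'' is precisely what has to be proved; moreover the identification ``$\g s\g l(V)=\g s\g l(V\oplus V^*)$'' you invoke is not correct (only $\tialg$ sits block-diagonally in $\g s\g l(V\oplus V^*)$ and is identified with its image in $\g s\g l(V)$).

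The outer case can be repaired in a few lines: $\phi(A)=A$ for all $A\in\tialg$ reads $-g\,^tA\,g^{-1}=A$, i.e. $^tA\,g^{-1}+g^{-1}A=0$, so $g^{-1}$ is the matrix of a nondegenerate $\tialg$-invariant bilinear form on $V$; since $V$ is irreducible, such a form is unique up to scalar and therefore symmetric or skew-symmetric, whence $^tg=\pm g$ and $\phi^2=\Ad(g\,^tg^{-1})=\Ad(\pm\Id)=\Id$. With this inserted your argument is complete, and it is arguably more direct than the paper's conjugation argument (the inner case is settled outright, and the outer case is then excluded by the proposition's analysis of the list). Without it, the claim that $\phi$ is an involution is unsupported exactly where the proposition's transposition case matters, which is the point the paper's Berger-type argument is designed to handle.
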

\begin{proof}
Let $\tau$ and $\phi$ be involutions determining real forms of $(\g s \g l(V),\tialg,\sigma,\theta)$ such that $\tau\,\vrule\,_{\tialg}=\phi\,\vrule\,_{\tialg}$.
If $\tau\phi^{-1}$ is an involution, we are done by the preceding theorem. This is always the case if $\tau$ and $\phi$ commute.

We can apply a Berger-type argument and find an inner automorphism $\psi$ fixing $\tialg$ and such that $\psi \phi \psi^{-1}$ commutes to $\tau$.
We can then conclude by the preceding.

The argument is as follows: One can write $\phi=\alpha \beta$ with $\alpha$ an involution commuting to $\tau$ and $\beta=e^{\ad X}$ with $\tau(X)=-X$. For $A\in\tialg$ we have $\tau\alpha(A)=\alpha \tau(A)$ which shows that $\tau$ fixes $\alpha(A)$.
We also have $A=\phi(A)=\alpha \beta(A)$ giving $\alpha(A)=\beta(A)$, from which we obtain $\alpha(A)=\tau\alpha(A)=\tau\beta(A)=\beta^{-1}\tau(A)=\beta^{-1}(A)$ and then $\beta(A)=\beta^{-1}(A)$.
As $e^{\ad tX}=\Id$ implies $tX=0$, we can write $\beta^{\frac{1}{2}}(A)=\beta^{-\frac{1}{2}}(A)$ and then $\beta(A)=A$ and $\alpha(A)=A$.
It is known that one can take $\psi=\beta^{\frac{1}{2}}$ and that for this choice $\psi \phi \psi^{-1}=\alpha$.
\end{proof}

We use here the classification of semi-simple symmetric spaces from \cite{Be}.

\begin{theo}\label{complextorealform}
The real forms of the symplectic extrinsic semi-simple quintuples from section \ref{complex_classif} are:

\centerline{
$\begin{array}{|l|l|l|l|}\hline
\tialg / \tih & \alg / \g h & N & \mbox{Remarks}\\\hline
\frac{\g s \g u(n+1-q, q)}{\g u(n-q,q)} & \frac{\g s \g u(N+1-Q, Q)}{\g u(N-Q,Q)} & \frac{1}{2}(n^2+3n)&\begin{array}{l}q=0\ldots n, n\ge 1\\Q=q+(n-q)q\end{array}\\
\rule[-3mm]{0pt}{5ex}
\frac{\g s \g l(n+1,\reels)}{\g g \g l(n,\reels)} & \frac{\g s \g l(N+1,\reels)}{\g g \g l(N,\reels)} & & \\
\hline
\frac{\g s \g u(n+1-p,p)}{\g s \g u(2)\oplus\g s \g u(n-1-p,p)\oplus u(1)} & \frac{\g s \g u(N+1-P, P)}{\g u(N-P,P)}& \frac{1}{2}(n+1)n-1 & \begin{array}{l}p=0\ldots n-1, n\ge 4\\P=(n-p+1)p\end{array}\\
\frac{\g s \g u(n+1-q,q)}{\g s \g u(1,1)\oplus\g s \g u(n-q,q-1)\oplus u(1)}&\frac{\g s \g u(Q,N+1-Q)}{\g u(Q-1,N+1-Q)}& &\begin{array}{l}q=1\ldots\lceil\frac{n}{2}\rceil\\Q=n+(n-q-1)q\end{array}\\
\frac{\g s \g l(n+1,\reels)}{\g s \g l(2,\reels)\oplus\g s \g l(n-1,\reels)\oplus\reels} & \frac{\g s\g l(N+1,\reels)}{\g g \g l(N,\reels)} & & \\
\rule[-3mm]{0pt}{5ex}
\frac{\g s \g l(m+1,\quaternions)}{\g s \g l(1,\quaternions)\oplus\g s \g l(m,\quaternions)\oplus\reels}& \frac{\g s\g l(N+1,\reels)}{\g g \g l(N,\reels)} & & n=2m+1\\
\hline
\rule[-3mm]{0pt}{5ex}
\frac{\g s \g o(n+1-p,p)}{\g s \g o(n-1-p,p)\oplus \g s \g o(2)}&
\frac{\g s \g u(N+1-p,p)}{\g \g u(N-p,p)}& n & p=0\ldots n-1, n\ge 4 \\
\rule[-3mm]{0pt}{5ex}
\frac{\g s \g o(n+1-q,q)}{\g s \g o(n-q,q-1)\oplus \g s \g o(1,1)}&
\frac{\g s \g l(N+1,\reels)}{\g g\g l(N,\reels)} & & q=1\ldots \lceil\frac{n}{2}\rceil\\
\hline
%
\rul\fracl{\g s \g o(10)}{\g u(5)}&
\fracl{\g s \g u(16)}{\g u(15)}& 15 & \\
\rul\fracl{\g s \g o(10-2p,2p)}{\g u(5-p,p)}& \fracl{\g s\g u(8,8)}{\g u(7,8)} & & p=1,2\\
\rul\fracl{\g s \g o^*(10)}{\g u(5-q,q)}& \fracl{\g s\g u(10,6)}{\g u(10,5)}& & q=0,1\\
\rul\fracl{\g s \g o^*(10)}{\g u(3,2)}& \fracl{\g s\g u(10,6)}{\g u(9,6)}& &\\
\rul\fracl{\g s \g o(5,5)}{\g g\g l(5,\reels)}& \fracl{\g s \g l(16,\reels)}{\g g \g l(15,\reels)} & &\\
\hline
\rul\fracl{\g e_6}{\g s\g o(10)\oplus\g s \g o(2)} & \fracl{\g s \g u(27)}{\g u(26)}& 26 & \\
\rul\fracl{\g e^3_6}{\g s\g o(10)\oplus\g s \g o(2)}& \fracl{\g s\g u(11,16)}{\g u(10,16)}& &\\
\rul\fracl{\g e^2_6}{\g s\g o^*(10)\oplus\g s \g o(2)}& \fracl{\g s\g u(12,15)}{\g u(11,15)}& &\\
\rul\fracl{\g e^3_6}{\g s\g o^*(10)\oplus\g s \g o(2)} & \fracl{\g s\g u(16,11)}{\g u(15,11)}& &\\
\rul\fracl{\g e^2_6}{\g s\g o(6,4)\oplus\g s \g o(2)}& \fracl{\g s\g u(15,12)}{\g u(14,12)} & &\\
\rul\fracl{\g e^3_6}{\g s\g o(2,8)\oplus\g s \g o(2)} &
\fracl{\g s\g u(11,16)}{\g u(10,16)}& &\\
\rul\fracl{\g e^1_6}{\g s\g o(5,5)\oplus\reels}
& \fracl{\g s\g l(27,\reels)}{\g g \g l(26,\reels)}& &\\
\rul\fracl{e^4_6}{\g s\g o(1,9)\oplus\reels}
& \fracl{\g s\g l(27,\reels)}{\g g \g l(26,\reels)}& &\\
\hline
\frac{\g s \g u(p_1+1,q_1)}{\g u(p_1,q_1)} \times \frac{\g s \g u(p_2+1,q_2)}{\g u(p_2,q_2)} &
\frac{\g s\g u(P+1,Q)}{\g u(P,Q)} &
ab+a+b& \begin{array}{l}1\le a\le b\\a=p_1+q_1, b=p_2+q_2\\P:=p_1 p_2+p_1+p_2+q_1 q_2\\Q:=p_1 q_2+q_1 p_2\end{array}\\
\frac{\g s \g l(a+1,\reels)}{\g g\g l(a,\reels)} \times \frac{\g s \g l(b+1,\reels)}{\g g\g l(b,\reels)} &
\frac{\g s\g l(N+1,\reels)}{\g g\g l(N,\reels)} &
& \\
\rule[-3mm]{0pt}{5ex}
\frac{\g s \g l(a+1,\complexes)}{\g g\g l(a,\complexes)} &
\frac{\g s \g u(R+1,S)}{\g u(R,S)} &
& R:=\frac{a(a+3)}{2}, S:=\frac{a(a+1)}{2}\\
\hline
\end{array}$
}
\end{theo}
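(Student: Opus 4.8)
The plan is to reduce the classification to that of the real forms of the inner datum $(\tialg,\theta)$, which are controlled by the facts recalled on symplectic semi-simple symmetric triples together with Berger's classification \cite{Be}, and then to reconstruct the outer symmetric pair $(\alg,\sigma)$ from the real structure that $\tialg$ induces on the representation $V$. First I would invoke the Corollary above: any real form of an example $(\g s\g l(V),\tialg,\sigma,\theta)$ of \S\ref{complex_classif} is determined by its restriction to $(\tialg,\theta)$. Hence for each of the six complex families it is enough to (a) enumerate the real forms of $(\tialg,\theta,\omega)$ (or of the two-factor product, in the last family) that are compatible with the extrinsic symplectic structure, and (b) compute for each such real form the corresponding outer pair $(\alg_0,\sigma_0)$, $\alg_0$ being a real form of $\g s\g l(V)$.

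For step (a): since in each case $\tialg$ is a (product of) simple symplectic symmetric triple(s) with one-dimensional isotropy center, its admissible real forms are exactly the (pseudo-)hermitian, para-hermitian and pseudo-complex ones described in the facts on symplectic symmetric triples, and reading these off the Berger tables for $\g s\g l$, $\g s\g o$, $\g s\g o(10)$ and $\g e_6$ gives a finite list. By the definition of extrinsic real forms, the hermitian (resp. para-hermitian) real forms put us in the compact (resp. non-compact) extrinsic type, and the correspondence proposition above then matches these with the inner (resp. outer) real forms $\tau_2$ of the quintuple; this yields the left column $\tialg_0/\tih_0$ of the table together with the parameter ranges ($q=0,\dots,n$, $p=0,\dots,n-1$, etc.) and, for $\g s\g l(a+1,\complexes)/\g g\g l(a,\complexes)$, the pseudo-complex row.

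For step (b): recall from the proof of the correspondence proposition that every weight vector of $V$ is obtained from the highest weight vector $X_{\alpha_0}$ by iterated brackets with elements of $\tialg$, so the antilinear involution $\tau$ defining the real form of $\tialg$ propagates to an antilinear map on $V$ — in the non-compact case on $V\oplus V^{*}$, exchanging the two summands. In the compact extrinsic type this exhibits a conjugation on $V$, the outer involution is $\Ad(I_{V_{+},V_{-}})$ with $V_{\pm}$ its $\pm1$-eigenspaces, and $\alg_0=\g s\g u(P+1,Q)$ where $(P+1,Q)$ is the signature of the induced invariant hermitian form on $V$; in the non-compact type one has $V\cong V^{*}$ as a real $\tialg_0$-module, the outer involution is the transposition, and $\alg_0=\g s\g l(N+1,\reels)$ with $\g h=\g g\g l(N,\reels)$. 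The only genuine computation left is then the signature of the hermitian form carried by $V$, which is the relevant functorial construction applied to the defining modules of the simple factors of $\tialg$: $S^{2}$, $\bigwedge^{2}$, the vector representation, a half-spin representation, the $27$-dimensional representation of $\g e_6$, or a tensor product. For instance, for $V=S^{2}(\complexes^{n+1})$ with $\complexes^{n+1}$ carrying a hermitian form of signature $(n+1-q,q)$, a basis adapted to the form gives $\dim V_{-}=(n+1-q)q=q+(n-q)q=Q$, as in the table; the other families are treated identically.

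The hard part will be precisely this case-by-case signature bookkeeping in the exceptional families: for the half-spin representation of the real forms $\g s\g o(10-2p,2p)$ and $\g s\g o^{*}(10)$, and for the $27$-dimensional representation of $\g e_6^{1},\dots,\g e_6^{4}$, one must track how $\tau$ acts on an explicit weight basis (equivalently, use the known signatures of the fundamental invariant hermitian forms on these modules) in order to pin down $(P,Q)$ and to distinguish the several rows attached to a single complex datum. Some care is also needed to avoid over-counting: as noted after the definition of extrinsic real forms, the flip between compact and non-compact extrinsic type is not a bijection on real forms, so for each complex family one must check that the compact-type and non-compact-type lists are complete and disjoint. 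Once the arithmetic is carried out in each of the six families and compared with Berger's list of the possible outer pairs $\g s\g u(P+1,Q)/\g u(P,Q)$ and $\g s\g l(N+1,\reels)/\g g\g l(N,\reels)$, the table follows.
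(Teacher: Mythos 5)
Your plan is essentially the paper's own proof: the paper likewise invokes the uniqueness statement (a real form of $(\tialg,\theta)$ determines at most one real form of $(\g s\g l(V),\tialg,\sigma,\theta)$), the correspondence between compact/non-compact extrinsic type and inner/outer involutions, and then runs case by case through Berger's list, reconstructing the outer pair from the induced real structure and hermitian signature on $V$ exactly as you describe (e.g.\ $Q=(n+1-q)q$ for $S^2(\complexes^{n+1})$). The only step you do not explicitly anticipate is the exclusion, in the product family, of the mixed real forms such as $\g s\g l(a+1,\reels)/\g g\g l(a,\reels)\times\g s\g u(b+1)/\g u(b)$, which the paper rules out by a totally geodesic restriction argument; your reconstruction of the antilinear involution on $V$ would detect this obstruction as well, so the approaches coincide.
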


\noindent To prove this we give details of the real forms of the outer and inner spaces and the corresponding involutions.
Note that for the given pairs $(\g g,\tilde{\g g})$ of section \ref{complex_classif} if two involutions (determining real forms) of $\g g$ commuting with $\sigma$ and $\theta$ coincide on $\tilde{\g g}$ they are necessarily equal, so a real form of $\tilde{\g g}/\tilde{\g h}$ determines at most one real form of $\g g/\g h$.

\subsubsection{The outer space}\label{os}
The exterior space $\g s \g l(N+1,\complexes)/\g g \g l(N,\complexes)$ admits the following real forms:
\begin{enumerate}
\item The compact real form $\lie{\complexes P^N} = \g s \g u(N+1)/\g u(N)$.
\item The real forms $\lie{\complexes P^{N-p,p}} = \g s \g u(N+1-p, p)/\g u(N-p,p)$ for $p=1,\ldots,N$, obtained by the inner involution $Ad(I_{N+1-p,p})$, where $I_{P,Q}=\left( \begin{smallmatrix}
I_P & \mathbf{0} \\
\mathbf{0} & -I_Q
\end{smallmatrix} \right)$.
\item The split real form $\g s \g l(N+1,\reels)/\g g \g l(N,\reels)$. It is obtained from the compact real form using the exterior involution $\tau : A \mapsto -^t A$.
\end{enumerate}
Note that for $N+1=2m$, $\g s \g l(2m,\complexes)$ admits an additional real form: $\g s \g l(m,\quaternions)$ but it does not give rise to a real form of the complex outer space as $N$ is odd.

\subsubsection{$\g s \g l(n+1,\complexes)/\g g \g l(n,\complexes)$, $V=S^2(\complexes^{n+1})$ with $n\ge 1$}\label{is}\label{S2} Let $N:=\frac{1}{2}(n^2+3n)$.
If $(e_0, e_1, \ldots, e_n)$ is a basis of weight vectors of the standard representation $\complexes^{n+1}$ of $\g s \g l(n+1,\complexes)$,
we have $V_+ = \complexes (e_0\odot e_0) \oplus \langle e_i \odot e_j \,|\, 1\le i\le j\le n\rangle$ and
$V_- = \langle e_0 \odot e_i \,|\, 1\le i\le n \rangle$.

The real forms of $\g s \g l(n+1,\complexes)/\g g \g l(n,\complexes)$ are:
\begin{enumerate}
\item The compact one $\lie{\complexes P^n} = \g s \g u(n+1)/\g u(n)$.
  It embeds into $\lie{\complexes P^N}$.
\item The real forms $\lie{\complexes P^{n-q,q}} = \g s \g u(n+1-q, q)/\g u(n-q,q)$ for $q=1,\ldots,n$, obtained by the inner involution $Ad(I_{n+1-q,q})$.
    It embeds into $\lie{\complexes P^{N-Q,Q}}$ for $Q=q+(n-q)q$.
\item The split real form $\g s \g l(n+1,\reels)/\g g \g l(n,\reels)$. It corresponds to an outer involution.
It embeds into $\g s \g l(N+1,\reels)/\g g \g l(N,\reels)$.
\end{enumerate}

\subsubsection{$\g s \g l(n+1,\complexes)/\g s \g l(2,\complexes)\oplus\g s \g l(n-1,\complexes)\oplus\complexes$, $V=\bigwedge^2 (\complexes^{n+1})$ with $n\ge 4$} Let $N:=\frac{1}{2}(n+1)n-1$.
Choose a basis of weight vectors $(e_1,\ldots,e_{n+1})$ w.r. to the first fundamental representation $\complexes^{n+1}$ of
$\g s \g l(n+1,\complexes)$. $V$ is then spanned by the weight vectors $e_i \wedge e_j$ for $i<j$. Let $\omega_i + \omega_j$ be the weight of $e_i \wedge e_j$. We have then: $S_0=\{\omega_1+\omega_2\}$, $S_1=\{\omega_1+\omega_j \,|\, j\ge 3\}\cup \{\omega_2+\omega_j \,|\, j\ge 3\}$, $S_2=\{\omega_i+\omega_j \,|\, 3\le i<j\}$.
$|S_0 \cup S_2|=1+\frac{1}{2}(n-1)(n-2)$, $|S_1|=2 (n-1)$.

The real forms of $\g s \g l(n+1,\complexes)/\g s \g l(2,\complexes)\oplus\g s \g l(n-1,\complexes)\oplus\complexes$ are:
\begin{enumerate}
\item The compact one $\lie{G_2(\complexes^{n+1})} = \g s \g u(n+1)/\g s \g u(2)\oplus\g s \g u(n-1)\oplus u(1)$.
  It embeds into $\lie{\complexes P^N}$.

\item The real forms $\g s \g u(n+1-p,p)/\g s \g u(2)\oplus\g s \g u(n-1-p,p)\oplus u(1)$ for $p=1,\ldots,n-1$.
They embed into $\g s \g u(N+1-P, P)/\g u(N-P,P)$, with $P=(n-p+1)p$.
\item The real forms $\g s \g u(n+1-q,q)/\g s \g u(1,1)\oplus\g s \g u(n-q,q-1)\oplus u(1)$ for $q=1,\ldots,\lceil\frac{n}{2}\rceil$.
They embed into $\g s \g u(Q,N+1-Q)/\g u(Q-1,N+1-Q)$, with $Q=n+(n-q-1)p$.
\item The split real form $\g s \g l(n+1,\reels)/\g s \g l(2,\reels)\oplus\g s \g l(n-1,\reels)\oplus\reels$.
It embeds into $\g s\g l(N+1,\reels)/\g g \g l(N,\reels)$.
\item In case $n=2m+1$, the real form $\g s \g l(m+1,\quaternions)/\g s \g l(1,\quaternions)\oplus\g s \g l(m,\quaternions)\oplus\reels$. It embeds into $\g s\g l(N+1,\reels)/\g g \g l(N,\reels)$.
\end{enumerate}

\subsubsection{$\g s \g o(n+1,\complexes)/\g s \g o(n-1,\complexes)\oplus \g s \g o(2,\complexes)$, $V=\complexes^{n+1}$ with $n\ge 4$} Here $N:=n$.

We use the following embedding:
$\g s\g o(n+1,\complexes)\simeq \tilde {\g k} \oplus \tilde {\g p} \subset \g s \g l(n+1,\complexes)$,
where $\tilde {\g k}$ (resp. $\tilde {\g p}$) defined by
$$\begin{array}{l l l} \tilde {\g k} & := &
\lbrace \left( \begin{smallmatrix}
a & 0  & {}^t\mathbf{0} \\
0 & -a & {}^t\mathbf{0} \\
\mathbf{0} & \mathbf{0} & A
\end{smallmatrix} \right) \, | \, a\in\complexes, A\in\g s\g o(n-1, \complexes) \rbrace,\\
\tilde{\g p} & := &
\lbrace \left( \begin{smallmatrix}
0 & 0 & -^t v \\
0 & 0 & -^t u \\
u & v & \mathbf{0}
\end{smallmatrix} \right) \, | \, u,v\in\complexes^{n-1}\rbrace
\end{array}$$
are the $+1$- (resp. $-1$-)eigenspaces of $\theta\,\vrule\,_{\tialg}$.

We obtain the following real forms:
\begin{enumerate}
\item The compact one $\lie{G_2(\reels^{n+1})} = \g s \g o(n+1)/\g s \g o(n-1)\oplus\g s \g o(2)$.
  It embeds into $\lie{\complexes P^N}=\g s \g u(n+1) / \g \g u(n).$

\item The real forms $\g s \g o(n+1-p,p)/\g s \g o(n-1-p,p)\oplus \g s \g o(2)$ for $p=1,\ldots,n-1$. They embed into $\g s \g u(n+1-p,p) / \g \g u(n-p,p)$. The real form corresponds to the involution $\tau=\Ad(I_{n+1-p,p})$.

\item The real forms $\g s \g o(n+1-p,p)/\g s \g o(n-p,p-1)\oplus \g s \g o(1,1)$ for $p=1,\ldots,\lceil\frac{n}{2}\rceil$.
They embed into $\g s \g l(N+1,\reels) / \g g\g l(N,\reels)$.
The real form corresponds to the involution defined by $\tau(X)=\Ad(I_{n+2-p,p-1})(-^tX)$.

\end{enumerate}

\subsubsection{$\g s \g o(10,\complexes)/\g g \g l(5,\complexes)$, $V=\complexes^{16}$}

$V$ is the halfspin representation of $\g s \g o(10,\complexes)$.
It can be described as follows: Let $W$ be the standard representation of $\g s \g o(10,\complexes)$ and $g$ an invariant metric on $W$. Let $W=E \oplus F$ be a decomposition of $W$ into two isotropic $\g g \g l(5,\complexes)$-invariant spaces. $F$ is then dual to $E$ as a $\g g \g l(5,\complexes)$ representation.
Let $\mathbf{e}:=(e_1,\ldots,e_5)$ be a basis of $E$, and let $\mathbf{f}:=(f_1,\ldots,f_5)$ be the basis of $F$ dual to $\mathbf{e}$.
The Cartan subalgebra $\g h_0$ is chosen to be $\{A\in \g g \g l(5,\complexes)\,|\, \forall i, A e_i\in \complexes e_i \}$ so that the $e_i$ (and the $f_i$) are weight vectors. Let $\alpha_i$ be the corresponding weights, {\em i.e.}
$\forall A\in\g h_0, A(e_i)=\alpha_i(A)e_i$. We have $\forall A\in\g h_0, A(f_i)=-\alpha_i(A)f_i$.
The $e_i\wedge e_j$, $f_i\wedge f_j$, for $i<j$, and the $e_i \wedge f_j$ for all $i$ and $j$ are root vectors corresponding to
the weights $\alpha_i + \alpha_j$, $-\alpha_i-\alpha_j$ and $\alpha_i-\alpha_j$.
The halfspin representation $\mathbf{S}$ can be characterized as follows: $\mathbf{S}=\bigwedge^5 E \oplus \bigwedge^3 E \oplus E$. The weights of $\bigwedge^k E$ are precisely the $\frac{1}{2}\sum_i \pm \alpha_i$ where exactly $k$ signs are positive.
With respect to the notations we used before, we have that $S_j$ is the set of weights of $\bigwedge^{5-2j} E$.

We obtain the following real forms:
\begin{enumerate}
\item The compact real form $\g s \g o(10)/\g u(5)$.
  It embeds into $\lie{\complexes P^{15}}$.

\item The real forms $\g s \g o(10-2p,2p)/\g u(5-p,p)$, $p=1,2$.
They embed into $\g s\g u(8,8)/\g u(7,8)$.

\item The real forms $\g s \g o^*(10)/\g u(5-p,p)$, $p=0,1,2$.
They embed into $\g s\g u(10,6)/\g u(10,5)$ in the cases $p=0,1$ and into $\g s\g u(10,6)/\g u(9,6)$ in the case $p=2$.

\item The split real form $\g s \g o(5,5)/\g g\g l(5,\reels)$. It embeds into $\g s \g l(16,\reels)/\g g \g l(15,\reels)$.
\end{enumerate}

\subsubsection{$\g e_6^\complexes/\g s \g o(10,\complexes)\oplus\complexes$, $V=\complexes^{27}$}
Let $\tilde\omega_0$ be the highest weight of the first fundamental representation of $\g e_6^\complexes$ which is of dimension $27$. Its weights, obtained completely in this case by letting act the Weyl group on the highest weight, can be partitioned into the following three sets:
$S_0=\{ \tilde\omega_0\},$
\medskip

$S_1=\{ \tilde\omega_0-\beta \,|\, \beta$ in the list\footnote{Here $\begin{smallmatrix}& &n_5& &\\n_0&n_1&n_2&n_3&n_4\end{smallmatrix}$ stands for the sum of roots $\sum_i n_i \alpha_i$, where the $\alpha_i$ are simple roots corresponding to the diagram:
$\begin{xy}
  \xymatrix@=6pt{           &                  & \alpha_5\ar@{-}[d]     &                     &  \\
           \alpha_0\ar@{-}[r] &  \alpha_1\ar@{-}[r] & \alpha_2\ar@{-}[r]     &    \alpha_3\ar@{-}[r]  & \alpha_4
}
\end{xy}.$}
$$\begin{matrix}
\begin{smallmatrix}& &0& &\\1&0&0&0&0\end{smallmatrix}, &
\begin{smallmatrix}& &0& &\\1&1&0&0&0\end{smallmatrix}, &
\begin{smallmatrix}& &0& &\\1&1&1&0&0\end{smallmatrix}, &
\begin{smallmatrix}& &0& &\\1&1&1&1&0\end{smallmatrix}, \\ \\
\begin{smallmatrix}& &0& &\\1&1&1&1&1\end{smallmatrix}, &
\begin{smallmatrix}& &1& &\\1&1&1&0&0\end{smallmatrix}, &
\begin{smallmatrix}& &1& &\\1&1&1&1&0\end{smallmatrix}, &
\begin{smallmatrix}& &1& &\\1&1&1&1&1\end{smallmatrix}, \\ \\
\begin{smallmatrix}& &1& &\\1&1&2&1&0\end{smallmatrix}, &
\begin{smallmatrix}& &1& &\\1&2&2&1&0\end{smallmatrix}, &
\begin{smallmatrix}& &1& &\\1&1&2&1&1\end{smallmatrix}, &
\begin{smallmatrix}& &1& &\\1&1&2&2&1\end{smallmatrix}, \\ \\
\begin{smallmatrix}& &1& &\\1&2&2&1&1\end{smallmatrix}, &
\begin{smallmatrix}& &1& &\\1&2&2&2&1\end{smallmatrix}, &
\begin{smallmatrix}& &1& &\\1&2&3&2&1\end{smallmatrix}, &
\begin{smallmatrix}& &2& &\\1&2&3&2&1\end{smallmatrix}\},\\
\end{matrix}$$
\medskip

$S_2=\{ \tilde\omega_0-\beta \,|\, \beta$ in the list
$$\begin{matrix}
\begin{smallmatrix}& &1& &\\2&2&2&1&0\end{smallmatrix}, &
\begin{smallmatrix}& &1& &\\2&2&2&1&1\end{smallmatrix}, &
\begin{smallmatrix}& &1& &\\2&2&2&2&1\end{smallmatrix}, &
\begin{smallmatrix}& &1& &\\2&2&3&2&1\end{smallmatrix}, &
\begin{smallmatrix}& &1& &\\2&3&3&2&1\end{smallmatrix}, \\ \\
\begin{smallmatrix}& &2& &\\2&2&3&2&1\end{smallmatrix}, &
\begin{smallmatrix}& &2& &\\2&3&3&2&1\end{smallmatrix}, &
\begin{smallmatrix}& &2& &\\2&3&4&2&1\end{smallmatrix}, &
\begin{smallmatrix}& &2& &\\2&3&4&3&1\end{smallmatrix}, &
\begin{smallmatrix}& &2& &\\2&3&4&3&2\end{smallmatrix}\}.\\
\end{matrix}$$
\medskip

Note that the weights of $S_1$ (resp. $S_2$), restricted to $\tilde\omega_0^\perp\cap \tih_0$, are the weights of the halfspin (resp. standard) representation of $\g s \g o (10,\complexes)$.

We obtain the following real forms of $\g e_6^\complexes/\g s \g o(10,\complexes)\oplus\complexes$:

\begin{enumerate}
\item The ones corresponding to an inner involution
\[
{\begin{array}{|l|c|l|}\hline\rule[-2mm]{0pt}{4ex}
\mbox{real form of } \tialg/\tih & \mbox{corresponding diagram} & \mbox{embeds into}\\\hline
\rule[-4mm]{0pt}{6ex}\g e_6/\g s\g o(10)\oplus\g s \g o(2) &
\raisebox{+12pt}{\begin{xy}
  \xymatrix@=6pt{           &                  & \cdot\ar@{-}[d]     &                     &  \\
           \bullet\ar@{-}[r] &  \cdot\ar@{-}[r] & \cdot\ar@{-}[r]     &    \cdot\ar@{-}[r]  & \cdot
}
\end{xy}}
& \lie{\complexes P^{26}}\\
\hline
\rule[-4mm]{0pt}{6ex}\g e^3_6/\g s\g o(10)\oplus\g s \g o(2) &
\raisebox{+12pt}{\begin{xy}
  \xymatrix@=6pt{           &                  & \cdot\ar@{-}[d]     &                     &  \\
            \bullet\drop\cir<3.5pt>{}\ar@{-}[r] &  \cdot\ar@{-}[r] & \cdot\ar@{-}[r]     &    \cdot\ar@{-}[r]  & \cdot
}
\end{xy}}
& \g s\g u(11,16)/\g u(10,16)\\
\hline
\rule[-4mm]{0pt}{6ex}\g e^2_6/\g s\g o^*(10)\oplus\g s \g o(2) &
\raisebox{+12pt}{\begin{xy}
  \xymatrix@=6pt{           &                    & \circ\ar@{-}[d]     &                     &  \\
            \bullet\ar@{-}[r] &  \cdot\ar@{-}[r] & \cdot\ar@{-}[r]     &    \cdot\ar@{-}[r]  & \cdot
}
\end{xy}}
& \g s\g u(12,15)/\g u(11,15)\\
\hline
\rule[-4mm]{0pt}{6ex}\g e^3_6/\g s\g o^*(10)\oplus\g s \g o(2) &
\raisebox{+12pt}{\begin{xy}
  \xymatrix@=6pt{           &                    & \circ\ar@{-}[d]     &                     &  \\
            \bullet\drop\cir<3.5pt>{}\ar@{-}[r] &  \cdot\ar@{-}[r] & \cdot\ar@{-}[r]     &    \cdot\ar@{-}[r]  & \cdot
}
\end{xy}}
 & \g s\g u(16,11)/\g u(15,11)\\
\hline
\rule[-4mm]{0pt}{6ex}\g e^2_6/\g s\g o(6,4)\oplus\g s \g o(2)&
\raisebox{+12pt}{\begin{xy}
  \xymatrix@=6pt{           &                    & \cdot\ar@{-}[d]     &                     &  \\
            \bullet\ar@{-}[r] &  \cdot\ar@{-}[r] & \cdot\ar@{-}[r]     &    \circ\ar@{-}[r]  & \cdot
}
\end{xy}}
 & \g s\g u(15,12)/\g u(14,12)\\
\hline
\rule[-4mm]{0pt}{6ex}\g e^3_6/\g s\g o(2,8)\oplus\g s \g o(2) &
\raisebox{+12pt}{\begin{xy}
  \xymatrix@=6pt{           &                    & \cdot\ar@{-}[d]     &                     &  \\
            \bullet\ar@{-}[r] &  \cdot\ar@{-}[r] & \cdot\ar@{-}[r]     &    \cdot\ar@{-}[r]  & \circ
}
\end{xy}}
& \g s\g u(11,16)/\g u(10,16)\\
\hline
\end{array}}
\]
In the diagrams the signs $\bullet$ define the root vectors $X_{\alpha_i}$ such that $\theta(X_{\alpha_i})=-X_{\alpha_i}$ and the signs $\circ$ define the root vectors $X_{\alpha_i}$ such that $\tau(X_{\alpha_i})=-X_{\alpha_i}$. The sign $\obullet$ stands for both $\circ$ and $\bullet$.

\noindent The first one in the list is the compact real form.

\item The ones corresponding to an outer involution
\[
{\begin{array}{|l|c|l|}\hline\rule[-2mm]{0pt}{4ex}
\mbox{real form of } \tialg/\tih & \mbox{embeds into}\\\hline
\rule[-2.5mm]{0pt}{5ex}\g e^1_6/\g s\g o(5,5)\oplus\reels
& \g s\g l(27,\reels)/\g g \g l(26,\reels)\\
\hline
\rule[-2.5mm]{0pt}{5ex}e^4_6/\g s\g o(1,9)\oplus\reels
& \g s\g l(27,\reels)/\g g \g l(26,\reels)\\
\hline
\end{array}}\]

\noindent The first one in the list is the split real form.
\end{enumerate}

\subsubsection{$\g s \g l(a+1,\complexes)/\g g \g l(a,\complexes) \times \g s \g l(b+1,\complexes)/\g g \g l(b,\complexes)$, $V= \complexes^{a+1}\otimes\complexes^{b+1}$ with $1 \le a \le b$}\label{eol} Let $N:=ab+a+b$.

\begin{enumerate}
\item The compact real form $\g s \g u(a+1)/\g u(a) \times \g s \g u(b+1)/\g u(b)$. It embeds into $\lie{\complexes P^{N}}$.
\item The real form $\g s \g u(p_1+1,q_1)/\g u(p_1,q_1) \times \g s \g u(p_2+1,q_2)/\g u(p_2,q_2)$, with $p_1+q_1=a$, $p_2+q_2=b$, $q_1+q_2\ge 1$. It embeds into $\lie{\complexes P^{P,Q}}=\g s\g u(P+1,Q)/\g u(P,Q)$, with $P:=p_1 p_2+p_1+p_2+q_1 q_2$, $Q:=p_1 q_2+q_1 p_2$. The corresponding involution is $\Ad(I_{p_1+1,q_1}\otimes I_{p_2+1,q_2})$.

\item The real form of the inner space $\g s \g l(a+1,\reels)/\g g\g l(a,\reels) \times \g s \g u(b+1)/\g u(b)$. Suppose that there is a real form of the outer space giving by restriction the given real form of the inner space.
    By restricting the algebra to $\g s \g l(a+1,\reels)/\g g\g l(a,\reels) \times \{0\}$ (resp. $\{0\}\times \g s \g u(b+1)/\g u(b)$) we would obtain a totally geodesic embedding. This in particular implies that the outer space is of the form  $\g s \g l(N+1,\reels)/\g g\g l(N,\reels)$ (resp. $\g s \g u(P+1,Q)/\g u(P,Q)$). But it can't be both.

\item The real form of the inner space $\g s \g u(a+1)/\g u(a) \times \g s \g l(b+1,\reels)/\g g\g l(b,\reels)$. Same remark as for $\g s \g l(a+1,\reels)/\g g\g l(a,\reels) \times \g s \g u(b+1)/\g u(b)$.

\item The split real form $\g s \g l(a+1,\reels)/\g g\g l(a,\reels) \times \g s \g l(b+1,\reels)/\g g\g l(b,\reels)$. It embeds into $\g s \g l(N+1,\reels)/\g g\g l(N,\reels)$.

\item The real form $\g s \g l(a+1,\complexes)/\g g\g l(a,\complexes)$.
    The outer space $\g s \g u(P+1,Q)/\g u(P,Q)$ with $P:=\frac{a(a+3)}{2}$ and $Q:=\frac{a(a+1)}{2}$
    matches. Here $\g s=\g s \g l(2a+2,\complexes)$, with ordered simple roots $\alpha_1, \ldots ,\alpha_{2a+1}$ and $\alpha_0=\alpha_{a+1}$. $\tau$ exchanges $X_{\alpha_i}$ and $X_{\alpha_{2a+2-i}}$.

\end{enumerate}

{\sc \small e-mail: thomas.krantz@uni-greifswald.de\\
address: Institut für Mathematik und Informatik, Walther-Rathenau-Str. 47, D-17487 Greifswald}
 
\end{document}